\newtheorem{defin}{Definition}
\newtheorem{prop}{Proposition}
\newtheorem{nt}{Remark}
\newtheorem{Th}{Theorem}
\newfont{\ssdbl}{msbm8}
\newfont{\sdbl}{msbm9}
\newfont{\dbl}{msbm10 at 12pt}
\newcommand{\Ob}{\mathop {\rm Ob}}
\newcommand{\oo}{{\cal O}}
\newcommand{\ad}{{\cal A}}
\newcommand{\diag}{\mathop {\rm diag}}
\newcommand{\Ind}{\mathop {\rm Ind}}
\newcommand{\Hom}{\mathop {\rm Hom}}
\newcommand{\Aut}{\mathop {\rm Aut}}
\newcommand{\Ext}{\mathop {\rm Ext}}
\newcommand{\Lim}{\mathop {\rm lim}}
\newcommand{\LLim}{\mathop {\rm 2lim}}
\newcommand{\Spec}{\mathop {\rm Spec}}
\newcommand{\Frac}{\mathop {\rm Frac}}
\newcommand{\Id}{\mathop {\rm Id}}
\newcommand{\Vect}{\mathop {\rm Vect}}
\newcommand{\da}{\mathbb{A}}
\newcommand{\dz}{\mathbb{Z}}
\newcommand{\dc}{\mathbb{C}}
\newcommand{\dqq}{\mathbb{Q}}
\newcommand{\dr}{\mathbb{R}}
\newcommand{\Ker}{{\rm Ker}\:}
\newcommand{\lto}{\longrightarrow}
\newcommand{\df}{\mathbb{F}}
\newcommand{\Gal}{\mathop {\rm Gal}}
\newcommand{\BB}{{\cal B}}
\newcommand{\CC}{{\cal C}}
\newcommand{\ve}{\varepsilon}
\newcommand{\VV}{{\mathcal{V}}}
\newcommand{\CCC}{{\mathbb{C}}}
\begin{document}

\author{D.V. Osipov\footnote{This work was partially supported by
RFBR (grants no.~11-01-00145-a, no.~11-01-12098-ofi-m-2011, and no.~12-01-33024 mol\_a\_ved), by
President's Programme for Support of Leading Scientific Schools
(grant no.~NSh-5139.2012.1).}}

\title{Unramified two-dimensional  Langlands correspondence}
\date{}

\maketitle

\qquad \qquad \qquad \qquad \qquad \qquad \qquad \qquad \qquad \qquad
{\em Dedicated to I.R. Shafarevich on}

\qquad \qquad \qquad \qquad \qquad \qquad \qquad \qquad \qquad \quad
{\em  the occasion of his ninetieth birthday}


\abstract{In this paper we describe the unramified Langlands correspondence for two-dimensional local fields,
we construct a categorical analogue of the unramified principal series representations and study its properties.
The main tool for this description is the construction of a central extension.
For this (and other) central extension we prove noncommutative
reciprocity laws (i.e. the splitting of the central extensions over some subgroups)
for arithmetic surfaces and projective surfaces over a finite field.
These reciprocity laws connect central extensions which are constructed locally and globally.}

\section{Introduction}
In 1995, M.~Kapranov proposed in~\cite{K}  a very hypothetical
generalization of the Langlands program for the two-dimensional case.
The classical Langlands correspondence deals with a one-dimensional
base: with number fields or with function fields of algebraic curves
defined over a finite field. In~\cite{K}  the question was raised on  the
existence of an analogue of the Langlands program for arithmetic
surfaces or surfaces defined over a finite field. The paper~\cite{K} did no contain constructions,  but the case of the
abelian two-dimensional (local) Langlands correspondence   was treated there.

The main idea proposed in~\cite{K} is that to an $ n $-dimensional representation of the group $ \Gal (K^{\rm sep} / K) $,
where 1) $ K $ is a two-dimensional local field\footnote{In this paper, by a two-dimensional local field $ K $ we mean a complete discrete valuation field with residue field $ \bar{K} $ such that  the field
$ \Bar{K} $ is a one-dimensional local field, that is a complete discrete valuation field with the finite residue field $ \df_q $. (A basic survey of the first notions of
two-dimensional local fields and two-dimensional adeles is contained in~\cite{O1}.)} or 2) $ K $ is  the  field of rational functions of an arithmetic surface or of a projective algebraic surface over a finite field,  one must somehow attach a (categorical) representation of the group
1) $ GL_{2n} (K) $ or 2) $ GL_{2n} (\da) $ in a $ 2 $-vector space. (Here $ \da $ is the ring of two-dimensional Parshin-Beilinson adeles,
see~\cite{H}, of a given  arithmetic surface or of a projective algebraic surface over a finite field.  In the case of an arithmetic surface this ring must take into account the fibers over infinite (archimedean) points of the base, see~\cite[Example~11]{OP}, \cite{P1}.)

In the case $ n = 1 $, the one-dimensional (classical) Langlands correspondence  is the statement of the one-dimensional (ordinary) class field theory. The two-dimensional class field theory was developed by A.N.~Parshin, K.~Kato and others, see survey~\cite{Ra}. In the local case it is reduced to the description of the abelian Galois group of a two-dimensional local field. This description is based on the construction of the reciprocity map:
 \begin{equation} \label{vz}
 K_2 (K) \lto \Gal (K^{\rm {ab}} / K) \mbox {,}
 \end{equation}
where $ K^{\rm {ab}} $ is the maximal abelian extension of a two-dimensional local field $ K $.

We note that recently A.N.~Parshin formulated in~\cite{P1} a
hypothesis on the direct image of automorphic forms, which connects
the two-dimensional abelian Langlands correspondence with the
classical Langlands correspondence in dimension one. The classical
Hasse-Weil conjecture on the existence of a functional equation for
$ L $-functions of arithmetic surfaces follows from this hypothesis.

In this paper we describe the unramified Langlands correspondence for two-dimensional local fields, we construct a categorical analogue of the unramified principal series representations of the group $ GL_{2n}(K) $, where $ K $ is a two-dimensional local field and the group $ GL_{2n} ( K) $ acts on the abelian $ \dc $-linear category, and we study the properties of the constructed categorical representations. The main tool for this goal
is a construction of some central extension of the group $ GL_2 (K) $ by the group $ \dr_+^* $. For this (and other) central extension we prove the non-commutative  reciprocity laws (i.e. splitting of central extensions over some subgroups) for arithmetic surfaces and projective surfaces over finite fields. These reciprocity laws connect  central extensions which are constructed locally and globally.

The paper is organized as follows.

In \S~\ref{abcase}  we recall the abelian case of the two-dimensional local Langlands correspondence.  In \S~\ref{odn} we connect  the one-dimensional $ 2 $-representations of a group  and the central extensions of the same group. In \S~\ref{Steinberg} we calculate the second cohomology group $ GL_n (K) $ for an infinite field $ K $.

In \S~\ref{C2} we consider central extensions which are constructed by $ C_2 $-spaces. In \S~\ref{C2fin} we recall the definition of the category
$ C_2^{\rm fin} $.
As examples of objects in this category we have  two-dimensional local fields, adelic rings of arithmetic surfaces or of algebraic surfaces over finite fields.
In \S~\ref{constr} we construct some central extensions of the group $ GL_n (\da_{\Delta}) $, where $ \da_{\Delta} $ is a  subring of the adelic ring of an arithmetic surface or of an algebraic surface over a finite field.
 In \S~\ref{dv} the constructed central extensions are studied in the case when $ \da_{\Delta} $ is a finite product of two-dimensional local fields. In \S~\ref{ar} we construct and study certain central extensions of the groups $ GL_n (\dr ((t))) $ and $ GL_n (\dc ((t))) $. These central extensions are used for the construction of central extensions of the group  $ GL_n (\da_X^ {\rm ar}) $, where $ \da_X^ {\rm ar} $ is the arithmetical  adelic ring of an arithmetic surface $ X $ (i.e., the adelic ring which  takes into account the archimedean fibers).
In \S~\ref{noncom} we prove non-commutative  reciprocity laws: in theorem~\ref{t1}
we prove that the central extensions which are constructed from the whole adelic  ring  $ \da $  of a projective surface over a finite field or of an arithmetic surface  split over some subgroups of $ GL_n (\da) $. These subgroups are important to describe the semilocal situation on the  scheme under consideration and are related either to  closed points, or to  integral one-dimensional subschemes of this scheme.

In \S~\ref{tlf} we consider the unramified Langlands correspondence for two-dimensional local fields. In \S~\ref{one-dim} we recall the  classical construction   of this correspondence for a one-dimensional local field. In \S~\ref{gract} we recall the known facts, which we need further, on the action of a group on a $ k $-linear category, where $ k $ is a field. In \S~\ref{catan} we construct a categorical analogue of the unramified principal series  representations of the group $ GL_{2n} (K) $, where $ K $ is a two-dimensional local field. We define also here the notion of a smooth action of the group $ GL_l (K) $ on a $ k $-linear abelian category (we call this category as a generalized $ 2 $-vector space) so that the constructed categorical analogue of the principal series representations will be smooth (in this case $ k = \dc $). This (and other) property of these representations is proved in theorem~\ref{th2}.
We note that to construct the categorical principal series representations for the group $ GL_{2n} (K) $ we use an analogue of the induced representation (for categories), as well as a central extension of the group $ GL_2 (K) $, which is  constructed in \S~\ref{C2} and is connected with  the unramified  class field theory for the field $ K $. In \S~\ref{hyp} we discuss some hypothesis about the relationship of a smooth spherical action of the group $ GL_{2n} (K) $ on any $ \dc $-linear abelian category with a categorical principal series representation  of the group $ GL_{2n} (K ) $ (this representation was constructed in \S~\ref{catan}).

This article was largely written during  the visit of the
author to the  Max Planck Institute for Mathematics  in Bonn in March
2012. The author is grateful to this Institute
for the excellent working conditions. I am also grateful to
S.O.~Gorchinskii and to A.N~Parshin for their valuable comments after my
talk at the seminar on the arithmetic algebraic geometry in the Steklov Mathematical Institute.

\section{The abelian case of two-dimensional Langlands correspondence} \label{abcase}
\subsection{Central extensions and one-dimensional $ 2 $-vector spaces} \label{odn}
We recall that, by definition,  a finite-dimensional $ 2 $-vector space $ \cal C $ over a field $ k $  is a  semisimple abelian $ k $-linear category such that  a  set of
isomorphism classes of simple objects is finite,  and  for any simple object $ E \in \cal C $ holds
 $ \Hom_ {\cal C} (E, E) \simeq k $, see~\cite{KV}. (Under a $ k $-linear category we mean a category $ \cal C $, in which for any two objects $ A $ and $ B $ the set $ \Hom_{\cal C} (A, B) $ is a  $ k $-vector space, and the composition of morphisms is bilinear. A category is called semisimple if every object in it is a finite direct sum of simple objects.) Number of elements in the set
of isomorphism classes of simple objects is called the dimension of a $ 2 $-vector space. It is easy to see that $ n $-dimensional $ 2 $-vector space
is equivalent to the category  $ ({\Vect_k^ {\rm fin}})^{n} $, where $ \Vect_k^ {\rm fin} $ is a category of finite-dimensional $ k $-vector spaces.

We
consider a one-dimensional $ 2 $-vector space $ \cal C $.
 In the one-dimensional $ 2 $-vector space $ \cal C $  all simple objects are isomorphic, and any $ k $-linear endofunctor $ F: \cal C \to \cal C $ up to isomorphism is given by a finite-dimensional vector space over $ k $:
$$
V = \Hom \nolimits_{\cal C} (E, F (E)) \mbox {,}
$$
where $ E $ is a simple object in $ \cal C $.
Conversely, a one-dimensional $ 2 $-vector space is equivalent to the category $ \Vect_k^{\rm fin} $. Therefore any finite-dimensional vector space
 $ V \in \Vect_k^{\rm fin} $ defines a $ k $-linear endofunctor on the category $ \Vect_k^{\rm fin} $ by the rule:
 $$
X \longmapsto V \otimes_k X \mbox {,} \quad \mbox {where} \quad X \in \Vect \nolimits_k \nolimits^{\rm fin} \mbox {.} $$

Let $ G $ be a group. By definition, a representation of the group $ G $ in a $ 2 $-vector space $ \cal C $ (of any dimension) is a strongly monoidal functor from the discrete category of $ G $ (with objects as elements of $ G $ and only the identity morphisms of objects) to the monoidal category of functors which are $ k $-linear equivalences of the category $ \cal C $, see~\cite{GK}, \cite[\S~5.3]{K}, and also \S~\ref{gract} later on. Therefore any $ 2 $-representation of the group  $ G $ in a one-dimensional $ 2 $-vector space  is defined up to equivalence by  $ 1 $-dimensional $ k $-vector spaces $ V_g $ (for every element $ g $ of the group $ G $) together with natural isomorphisms
$$ V_{g_1} \otimes_k V_{g_2} \longrightarrow V_{g_1g_2}  $$
which satisfy the associativity condition for any elements $ g_1, g_2, g_3 $ of the group $ G $. These data correspond to a central extension of
the group $ G $:
$$
1 \lto k ^ * \lto \widehat{G} \stackrel{\pi}{\lto} G \lto 1 \mbox {,}
$$
where $ k^* $-torsor $ \pi^{-1} (g) $ is equal to $ V_g \setminus 0 $. Hence we conclude that the set of equivalence classes of one-dimensional
$ 2 $-representations of the group $ G $ coincides with the set $ H^2 (G, k^*) $.
\begin{nt} {\em
The category of representations of any group (even more, of an arbitrary $ 2 $-group) in a finite-dimensional $ 2 $-vector space has been in detail studied in~\cite{Elg}.}
\end{nt}

\subsection{The second cohomology groups of the group $ GL_n (K) $} \label{Steinberg}
Let $ A $ be any abelian group, $ K $ be an arbitrary infinite field. Following~\cite[\S~1]{D}
(see also~\cite{Bu}), we construct an explicit map
\begin{equation} \label{expl}
\Hom (K_2 (K), A) \lto H^2 (GL_2 (K), A) \mbox {.}
\end{equation}
Let us consider the universal central extension
\begin{equation} \label{uext}
1 \lto K_2 (K) \lto St (K) \lto SL (K) \lto 1 \mbox {,}
\end{equation}
where $ St (K) $ is the Steinberg group of the field $ K $, and $ SL (K) = \mathop {\underrightarrow {\lim}} \limits_n SL_n (K) $. Let us consider the central extension which is a pullback
of the central extension~\eqref{uext} under the embedding of the group $ SL_2 (K) $ to the group $ SL (K) $:
$$
1 \lto K_2 (K) \lto \widetilde{SL_2 (K)} \lto SL_2 (K) \lto 1 \mbox {.}
$$
We  have
 also 
 \begin{equation} \label {coinv}
 K_2 (K) = H_2 (SL (K), \mathbb{Z}) = H_2 (SL_2 (K), \mathbb{Z})_{K^*} \mbox {.}
 \end{equation}
The  group $ GL_2 (K) = SL_2 (K) \rtimes K^* $,
where the group $ K^* = GL (1, K) \hookrightarrow GL_2 (K) $ (embedding in the upper-left corner) acts on
the group $ SL_2 (K) $ by conjugations, which are inner automorphisms of the group $ GL_2 (K) $.
The group $ K^* $ acts also on the group $ SL (K) $ by conjugations.
Because of the universality of the central extension~\eqref{uext} we obtain the lift of the action of the group $ K^* $
on $ St (K) $, whose restriction to the kernel $ K_2 (K) $ is trivial by formula~\eqref{coinv}.
Hence we have a central extension
\begin{equation} \label{cenext}
1 \lto K_2 (K) \lto \widehat{GL_2 (K)} \stackrel {\pi} {\lto} GL_2 (K) \lto 1
\end{equation}
which splits over the subgroup $ K^* $, and $ \widehat {GL_2 (K)} = \widetilde {SL_2 (K)} \rtimes K^* $. The universal symbol $ (x, y) \in K_2 (K) $
is also obtained in the following way (see~\cite[\S~1.9]{D}):
\begin{equation}  \label{equ}
(x,y) = \left< \left(
\begin{matrix}
y & 0\\
0 & 1
\end{matrix}
\right),
\left( \begin{matrix}
1 & 0 \\
0 & x
\end{matrix}
\right) \right>
  \mbox{,}
\end{equation}
where for all commuting matrices $ A $ and $ B $ from $ GL (2, K) $
the element \linebreak $ <A,B> = [\hat{A}, \hat{B}] $ from $ K_2 (K) $
is well-defined, and
$ \pi(\hat{A}) = A $,
$ \pi (\hat{B}) = B $. The element $ <A,B>$   does not depend on the choice of suitable elements $ \hat{A} $ and $ \hat{B} $ from the group
$ \widehat{GL_2 (K)} $. Applying now an
element from $ \Hom (K_2 (K), A) $ to the kernel of  central extension~\eqref{cenext}, we obtain an explicit
description of map~\eqref{expl}.

On the other hand, from the Hochschild-Serre spectral sequence we have an exact sequence:
$$
0 \lto H^2 (K^*, A) \lto H^2 (GL_2 (K), A) \stackrel {\alpha} {\lto} H^2 (SL_2 (K), A)^{K^*} \mbox {.}
$$
(It's also worth to note that $ H^1 (SL_2 (K), A) = \Hom (SL_2 (K), A) = 0 $, since the group $ SL_2 (K) $ is a perfect group.)

By the universal coefficient formula and since the group $ SL_2 (K) $ is a perfect group, we obtain that
 $$ H ^2(SL_2 (K), A) = \Hom (H_2 (SL_2 (K), \mathbb{Z}), A) \mbox {.} $$
 Consequently, we have
$$
H^2 (SL_2 (K), A)^{K^*} = \Hom (H_2 (SL_2 (K), \mathbb{\dz})_{K^*}, A) = \Hom (K_2 ( K), A) \mbox {.}
$$
Besides, map~\eqref{expl} is a section of map~$\alpha $. Summing up the above, we obtain  the following statement.
\begin{prop} \label{prop-coh}
Let $ A $ be an abelian group, $ K $ be an infinite field. Then we have
$$
H^2 (GL_2 (K), A) = H^2 (K^*, A) \oplus \Hom (K_2 (K), A) \mbox {.}
$$
\end{prop}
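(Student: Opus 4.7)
\medskip

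\noindent\textbf{Proof proposal.} My plan is to assemble the four ingredients already laid out in the excerpt into a proof in three passes: produce the short exact sequence from Hochschild--Serre, identify the right-hand term with $\Hom(K_2(K),A)$, and exhibit a splitting via the explicit map~\eqref{expl}.

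First, I would apply the Hochschild--Serre spectral sequence to the semidirect product decomposition $GL_2(K)=SL_2(K)\rtimes K^*$, whose five-term exact sequence begins
$$
0 \lto H^1(K^*,A) \lto H^1(GL_2(K),A) \lto H^1(SL_2(K),A)^{K^*} \lto H^2(K^*,A) \lto H^2(GL_2(K),A) \stackrel{\alpha}{\lto} H^2(SL_2(K),A)^{K^*}.
$$
Since $K$ is infinite, the group $SL_2(K)$ coincides with its commutator subgroup, so $H^1(SL_2(K),A)=\Hom(SL_2(K),A)=0$ as noted in the excerpt. This truncates the sequence and leaves
$$
0 \lto H^2(K^*,A) \lto H^2(GL_2(K),A) \stackrel{\alpha}{\lto} H^2(SL_2(K),A)^{K^*}
$$
as stated.

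Second, I would identify $H^2(SL_2(K),A)^{K^*}$ with $\Hom(K_2(K),A)$. Perfectness of $SL_2(K)$ makes $H_1(SL_2(K),\dz)=0$, so the universal coefficient theorem collapses to $H^2(SL_2(K),A)=\Hom(H_2(SL_2(K),\dz),A)$. Taking $K^*$-invariants and using formula~\eqref{coinv}, that is Matsumoto's theorem identifying the $K^*$-coinvariants of $H_2(SL_2(K),\dz)$ with $K_2(K)$, converts this into $\Hom(K_2(K),A)$.

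Third, and this is the main step, I would check that the map~\eqref{expl} is a genuine section of $\alpha$. Given $\chi\in\Hom(K_2(K),A)$, pushing out the central extension~\eqref{cenext} along $\chi$ yields a class $\chi_*\in H^2(GL_2(K),A)$; restricting it to $SL_2(K)$ gives $\alpha(\chi_*)$, viewed as an element of $\Hom(K_2(K),A)$ via the identification in the previous paragraph. By construction of $\widehat{GL_2(K)}=\widetilde{SL_2(K)}\rtimes K^*$, the restriction of~\eqref{cenext} to $SL_2(K)$ is precisely the pullback of the universal central extension~\eqref{uext}. Chasing the definitions, the resulting class in $H_2(SL_2(K),\dz)_{K^*}=K_2(K)$ is the universal symbol, which by~\eqref{equ} reproduces the original $\chi$. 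Hence $\alpha\circ(\text{map~\eqref{expl}})=\Id$.

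Once~\eqref{expl} is a section, $\alpha$ is surjective, the short exact sequence splits, and we obtain the direct sum decomposition
$$
H^2(GL_2(K),A) = H^2(K^*,A) \oplus \Hom(K_2(K),A).
$$
The main obstacle I anticipate is the third step: one must verify that the action of $K^*$ lifted to $St(K)$ from the universality of~\eqref{uext} is compatible with the conjugation action on $GL_2(K)$ in such a way that the commutator formula~\eqref{equ} computes the universal symbol correctly. Everything else is a straightforward reading of Hochschild--Serre combined with Matsumoto's theorem.
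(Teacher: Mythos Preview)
Your proposal is correct and follows essentially the same route as the paper's own argument: the paper also derives the three-term exact sequence from Hochschild--Serre using perfectness of $SL_2(K)$, identifies $H^2(SL_2(K),A)^{K^*}$ with $\Hom(K_2(K),A)$ via universal coefficients together with formula~\eqref{coinv}, and then observes that the explicit map~\eqref{expl} is a section of $\alpha$, yielding the splitting. Your write-up is slightly more detailed (you spell out the full five-term sequence and the truncation), but there is no substantive difference in strategy.
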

\begin{nt} \label{dec-coh} {\em
Similarly, we can prove a more general statement for any $ n \ge 2 $ and an infinite field $ K $:
\begin{equation} \label{gen}
H^2 (GL_n (K), A) = H^2 (K^*, A) \oplus \Hom (K_2 (K), A) \mbox {.}
\end{equation}
}
\end{nt}

Combining the result of proposition~\ref{prop-coh} with  reciprocity map~\eqref{vz}, we obtain a map from the characters of the Galois group of a two-dimensional local field
$ K $ to the one-dimensional $2$-representations of the group $ GL (2, K) $. Hence, taking into account the topology on the group $ K_2 (K) $, we obtain a complete description of the two-dimensional abelian Langlands correspondence.

\section{$ C_2 $-spaces and central extensions} \label{C2}
\subsection{The category $ C_2^{\rm fin} $} \label{C2fin}
In the papers~\cite{Osip} and~\cite[\S~3.1]{OP} the  categories $ C_n^{\rm fin} $ for $ n \ge 0 $ were constructed. We will be interested in the category $ C_2^{\rm fin} $.
This category is constructed by induction.

The category $ C_0^{\rm fin} $ is the category of finite abelian groups.

The objects  of the category $ C_1^{\rm fin} $    are
filtered abelian groups $ (I, F, V) $. Here $ V $ is an abelian group, $ I $ is a partially ordered set such that
 for any elements $ i, j \in I $ there exist elements $ k, l \in I $ with the property $ k \le i \le l $, $ k \le j \le l $. And also $ F $  is a function from the set $ I $ to the set of subgroups of $ V $ such that if $ i \le j $ then
$ F (i) \subset F (j) $. In addition, we demand that $ \bigcap_{i \in I} F (i) = 0 $ and $ \bigcup_{i \in I} F (i) = V $.
The inductive step is that the group $ F (j) / F (i) $ is a finite group for any $ j \ge i \in I $, i.e.  this group is an object of the category
$ C_0^ {\rm fin} $. Morphisms between  constructed objects copy  the definition of continuous morphisms in one-dimensional local fields. More precisely, if $ E_1 = (I_1, F_1, V_1) $ and  $ E_2 = (I_2, F_2, V_2) $ are
objects from the category $ C_1^{\rm fin} $, then $ \Hom_{C_1^{\rm fin}} (E_1, E_2) $ consists of all homomorphisms of abelian groups $ f: V_1 \to V_2 $ such that the following  condition holds: for any $ j \in I_2 $ there exists $ i \in I_1 $ such that $ f (F_1 (i)) \subset F_2 (j) $.
 Examples of objects from the category  $ C_1^{\rm fin} $ are one-dimensional local fields,   the adelic rings of curves over finite fields, and  the rings of finite adeles of number fields.

The objects of the category $ C_2^{\rm fin} $ are filtered abelian groups $ (I, F, V) $ with the following condition: for any elements
$ i \le j \in I $, the structure of an object from the category of $ C_1^{\rm fin} $ is given on the group
  $ F (j) / F (i) $. This requires some coordination of structures of objects from
$ C_1^{\rm fin} $ for all $ i \le j \le k \in I $, see the exact definition in~\cite[Def.~4]{OP}. Morphisms between objects in the category of
$ C_2^{\rm fin} $ are defined inductively by means of  morphisms from the category $ C_1^{\rm fin} $, which are  defined on the quotient groups of filtrations, see the precise definition in~\cite[Def.~5]{OP}. Examples of objects from the category $ C_2^{\rm fin} $ are two-dimensional local fields,   adelic rings of algebraic surfaces over finite fields and adelic rings of arithmetic surfaces (excluding the fibers over the archimedean places), see~\cite[Th.~2.1]{Osip},~\cite[Example 1]{OP}.

\subsection {A central extension  $ \widehat{GL_n (\da_{\Delta})}_{\dr_ +^*} $} \label{constr}
Let $ K $ be a one-dimensional local field with the finite residue field $ \df_q $. Then there is a homomorphism
\begin{equation} \label{1det}
 GL (n, K) \lto \dr_+^* \quad : \quad A \mapsto q^{\nu (\det (A))} \mbox{,}
\end{equation}
where $ \nu $ is the discrete
valuation of the field $ K $. We see that this homomorphism comes from the homomorphism $ K^* \lto \dz $, i.e  from  the element of
$ H^1 (K^*, \dz) $.
In addition, there is a homomorphism: $ GL (n, \da^ {\rm fin}) \lto \dr_+^* $, where $ \da^{\rm fin} $ is the ring of finite adeles  of a curve over a finite field or of a number field. This homomorphism  is obtained by multiplying the local maps \eqref{1det}.

Now let $ X $ be an integral two-dimensional normal scheme of finite type over $ \dz $ (for example, a surface over a finite field or an arithmetic  surface).
Let $ \Delta $ be a subset in the set of all pairs $ x \in C $, where $ x \in X $ is a closed point, and $ C $ is an integral   one-dimensional subscheme
of $ X $ such that this subscheme  contains the point $ x $.
Inside of the ring $ \prod_{x \in C} K_{x, C} $ we define  the following subrings:
$$
\da_{\Delta} = \da_X \cap \prod_{\{ x \in C \} \in \Delta} K_{x, C} \mbox{,} \qquad \quad
\oo_{ \da_{\Delta}} = \da_X \cap \prod_{\{ x \in C \} \in \Delta } \oo_{K_{x, C}} \mbox{.}
$$
where $ \da_X $ is the adelic ring of the scheme $ X $, the  ring\footnote{See also further the beginning of section~\ref{dv}, where this ring is  described in detail.} $ K_{x, C} = \prod_{i } K_i $ is the finite product of two-dimensional local fields $ K_i $, constructed by a pair $ x \in C $,
$ \oo_{K_{x, C}} = \prod_i \oo_{K_i} $, where $ \oo_{K_i} $ is
the rank $ 1 $ discrete  valuation ring  of  the field $ K_i $.
Note that if $ \Delta = \Delta_1 \cup \Delta_2 $ and $ \Delta_1 \cap \Delta_2 = \emptyset $, then
$$
\da_{\Delta} = \da_{\Delta_1} \times \da_{\Delta_2} \mbox{,} \qquad \oo_{\da_{\Delta}} = \oo_{\da_{\Delta_1}} \times
\oo_{\da_{\Delta_2}} \mbox{,}
$$
\begin{equation} \label{decom}
GL_n (\da_{\Delta}) = GL_n (\da_{\Delta_1}) \times GL_n (\da_{\Delta_2}) \mbox {.}
\end{equation}

  We will construct a central extension for any $ n \ge 1 $:
\begin{equation} \label{ext1}
1 \lto \dr_+^* \lto \widetilde{GL_n (\da_ {\Delta})}_{\dr_+^*} \stackrel{\theta}{\lto} GL_n (\da_{\Delta}) \lto 1 \mbox {.}
\end{equation}

Let us consider  the following $ C_2^{\rm fin} $-structure $ (I, F, \da_{\Delta}^n) $ on $ \da_{\Delta}^{n}$, where the set
$$
I = \left \{ \oo_{\da_{\Delta}} - \mbox{submodules} \quad T \subset \da_{\Delta}^n \; \mid \; T = g \oo_{\da_{ \Delta}}^{\, n} \; \,
\mbox{for some} \quad g \in GL_n (\da_{\Delta})
 \right \}
$$
is ordered by embeddings of  submodules, and the function $ F $ maps an element from $ I $ to the corresponding submodule of $ \da_{\Delta}^n $.
For any elements $ i \le j \in I $, we have that
$ \oo_{\da_{\Delta}} $-module $ F (j) / F (i) $ is a locally compact abelian group, where the topology on this group is considered as the quotient and the  induced topology  from the topological group $ \da_{\Delta}^n $. (The group $ \da_{\Delta} $ is endowed with the natural topology of iterated inductive and projective limits by virtue of its construction.) Let us  consider a filtration on $ F (j) / F (i) $ given by  open compact subgroups. This filtration sets on $ F (j) / F (i) $ the structure of  an object  from the category of $ C_1^{\rm fin} $. These structures will be coherent in the sense of $ C_2^{\rm fin} $-structure for different $ i \le j \le k \in I $.

For any locally compact abelian group $ E $ let $ \mu(E) $ be the canonical $ \dr_+^* $-torsor  of nonzero Haar measures on $ E $. For any elements
$ i, j \in I $ we consider an \linebreak
$ \dr_+^* $-torsor $ \mu (F (i) \mid F (j)) $ which is canonically defined by the following conditions:
\begin{enumerate}
\item $ \mu (F (i) \mid F (j)) \otimes \mu (F (j) \mid F (k)) = \mu (F (i) \mid F (k)) $ for any $ i, j, k \in I $;
\item $ \mu (F (i) \mid F (j)) = \mu (F (j) / F (i)) $ for all $ i \le j \in I $ \mbox {.}
\end{enumerate}
Any element $ g \in GL_n (\da_{\Delta}) $ moves the $ \dr_+^* $-torsor $ \mu (F (i) \mid F (j)) $ to the $\dr_+^* $-torsor $ \mu (gF (i) \mid gF (j)) $.
Let us {\em define} a group
$$ \widetilde {GL_n (\da_{\Delta})}_{\dr_+^*} = \left \{(g, \mu) \; \mid \; g \in GL_n (\da_{\Delta }), \;
\mu \in \mu (\oo_{\da_{\Delta}}^{\, n} \mid g \oo_{\da_{\Delta}}^{\, n}) \right \} \mbox { .} $$
 The multiplication law in this group is defined as
$$ (g_1, \mu_1) (g_2, \mu_2) = (g_1g_2, \mu_1 \otimes g_1 (\mu_2)) $$
with the obvious identity element and the construction of the
inverse element. The map $ \theta: (g, \mu) \mapsto g $ defines
central extension~\eqref{ext1}.

The group $ GL_n (\da_ {\Delta}) = SL_n (\da_{\Delta}) \rtimes
\da_{\Delta}^* $ (the group $ \da_{\Delta}^* $ is embedded into the
upper left corner of the group $ GL_n (\da_{\Delta}) $, see
also~\S\ref{Steinberg}). With the help of the central
extension~\eqref{ext1} the action of the group $\da_{\Delta}^* $ is
uniquely lifted to an action on the group $ \theta^{-1} (SL_n
(\da_{\Delta})) $ (by means of the inner automorphisms in the group
$ \widetilde {GL_n (\da_{\Delta})}_{\dr_+^*}) $). This action
becomes trivial after its restriction  to the kernel $ \dr_+^* $.
Let us {\em define} a group $ \widehat {GL_n
(\da_{\Delta})}_{\dr_+^*} = \theta^{-1}(SL_n (\da_ {\Delta}))
\rtimes \da_{\Delta}^* $. We obtain a central extension
\begin{equation} \label {ext2} 1 \lto \dr_+^* \lto
\widehat{GL_n (\da_{\Delta})}_{\dr_+^*} {\lto} GL_n (\da_{\Delta})
\lto 1 \mbox {,}
\end{equation}
which splits over the  subgroup  $ \da_{\Delta}^* $ of the group
$GL_n (\da_{\Delta}) $.

\begin{nt} {\em
The central extension  $ \widehat{GL_n
(\da_{\Delta})}_{\dr_+^*} $ is  not isomorphic to the central
extension  $ \widetilde{GL_n (\da_{\Delta})}_{\dr_+^*} $, since the
first central extension splits over the subgroup  $ \da_{\Delta}^* $
of the group $ GL_n (\da_{\Delta}) $, while the second central
extension does not split over the same subgroup. The latter follows,
for example, from the explicit calculation of the commutator of the
lifting   of some elements from the commutative subgroup   $
\da_{\Delta}^* $.  This commutator is not equal to zero on some
elements, but it would be identically equal to zero on all  elements
in the case of a split central extension.

 To define the central extension  $ \widehat{GL_n
(\da_{\Delta})}_{\dr_+^*} $, we have embedded the group $
\da_{\Delta}^* $ into the upper left corner of the group $ GL_n
(\da_ {\Delta}) $. If we embed this group at any $ i $-th (where $ 2
\le i \le n $) place on the diagonal, then the newly constructed
central extension of the group  $ GL_n (\da_{\Delta}) $ is
canonically isomorphic to the central extension $ \widehat {GL_n
(\da_{\Delta})}_{\dr_+^*} $ of the same group. We show it by
assuming for simplicity $ n = 2 $. We fix a matrix $ A =
\left(\begin{matrix} 0 & 1 \\ -1 & 0 \end{matrix} \right) $, which
conjugates the matrices $ \diag (b, 1) $ and $ \diag (1, b) $ in the
group $ GL_2 (\da_{\Delta}) $. Let $ \phi_A $ be  the inner
automorphism of the group $ \theta^{-1}(SL_2 (\da_{\Delta})) $ which
is defined by the lifting  $ \hat{A} $ of the element  $ A $ to this
group (the automorphism $ \phi_A $ does not depend on the choice of the
lifting). The homomorphisms $ \da_{\Delta}^* \to \Aut (\theta^{-1}
(SL_2 (\da_{\Delta}))) $ which are constructed from the embedding of
the group $ \da_{\Delta}^* $ into different places on the diagonal
of the group $ GL_2 (\da_{\Delta})$ distinct by  the conjugation by
the element  $ \phi_A $ in the group $ \Aut (\theta^{-1}
(SL_2(\da_{\Delta}))) $. Therefore the map $ \{x, b \} \mapsto
\{\phi_A (x), b \} $, where $ \{x, b \} \in \theta^{-1} (SL_2
(\da_{\Delta})) \rtimes \da_{\Delta}^* $, is an isomorphism of the
two semidirect products which are constructed by embeddings of the
group $ \da_{\Delta}^* $ into different places on the diagonal. This
isomorphism induces an inner automorphism of the group $ GL_2
(\da_{\Delta}) $ such that it is defined by the element $ A $. It
remains to note that any inner automorphism of the group induces the
canonical automorphism of a central extension of the same group.  }
\end{nt}

We note also that from the construction it follows  at once that the
central extensions  $ \widetilde{GL_n (\da_{\Delta})}_{\dr_+^*} $
 and $ \widehat{GL_n (\da_{\Delta})}_{\dr_+^*} $ canonically split
over the subgroup $ GL_n (\oo_{\da_{\Delta}}) $ of the group $ GL_n
(\da_{ \Delta}) $.

\begin{prop} \label{prope} The  central extensions so constructed satisfy
the following properties.
\begin{enumerate}
\item \label {it1}
If
$ \Delta = \Delta_1 \cup \Delta_2 $ and $ \Delta_1 \cap \Delta_2 = \emptyset $, then the central extension
$ \widehat{GL_n (\da_{\Delta})}_{\dr_+^*} $ is the Baer sum of the central
extensions $ \widehat{GL_n (\da_ {\Delta_1})}_{\dr_+^*} $ and $ \widehat{GL_n (\da_{\Delta_2}) }_{\dr_+^*}
$ (with respect to the projections onto the direct summands in  expansion~\eqref{decom}).
\item \label{it2} A central extension $ \widehat {GL (\da_{\Delta})}_{\dr_+^*}$  of the group
$ GL (\da_{\Delta}) = \mathop{\underrightarrow{\lim}} \limits_n GL_n (\da_{\Delta}) $
is well-defined by the central extensions
$ \widehat{GL_n (\da_{\Delta})}_{\dr_+^*} $ ($ n \ge 1 $).
\end{enumerate}
\end{prop}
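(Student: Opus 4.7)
The plan is to reduce both items to the multiplicative behavior of the $\dr_+^*$-torsor $\mu(\cdot \mid \cdot)$ used to build \eqref{ext1}, together with the canonical decomposition of Haar measure on a direct product of locally compact abelian groups.

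For item \ref{it1}, the hypothesis $\Delta = \Delta_1 \cup \Delta_2$ with $\Delta_1 \cap \Delta_2 = \emptyset$ forces the decompositions $\oo_{\da_\Delta}^{\,n} = \oo_{\da_{\Delta_1}}^{\,n} \oplus \oo_{\da_{\Delta_2}}^{\,n}$ and, for any $g = (g_1,g_2) \in GL_n(\da_\Delta) = GL_n(\da_{\Delta_1}) \times GL_n(\da_{\Delta_2})$, also $g\oo_{\da_\Delta}^{\,n} = g_1\oo_{\da_{\Delta_1}}^{\,n} \oplus g_2\oo_{\da_{\Delta_2}}^{\,n}$. Since a nonzero Haar measure on a direct product of locally compact abelian groups is canonically the tensor product of nonzero Haar measures on the two factors, this yields a canonical isomorphism of $\dr_+^*$-torsors
$$
\mu\bigl(\oo_{\da_\Delta}^{\,n} \mid g\oo_{\da_\Delta}^{\,n}\bigr) \;\simeq\; \mu\bigl(\oo_{\da_{\Delta_1}}^{\,n} \mid g_1\oo_{\da_{\Delta_1}}^{\,n}\bigr) \otimes_{\dr_+^*} \mu\bigl(\oo_{\da_{\Delta_2}}^{\,n} \mid g_2\oo_{\da_{\Delta_2}}^{\,n}\bigr).
$$
I would then check that this identification is compatible with the multiplication law $(g,\mu)(g',\mu') = (gg', \mu \otimes g(\mu'))$ defining \eqref{ext1}: since each $g_i$ acts only on the $i$-th factor, both the tensor product of torsors and the translation action split blockwise. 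This exhibits $\widetilde{GL_n(\da_\Delta)}_{\dr_+^*}$ as the Baer sum of the pullbacks of $\widetilde{GL_n(\da_{\Delta_i})}_{\dr_+^*}$ along the two projections in \eqref{decom}. The passage from the $\widetilde{(-)}$ to the $\widehat{(-)}$ version is compatible with this decomposition because $\da_\Delta^* = \da_{\Delta_1}^* \times \da_{\Delta_2}^*$ and the semidirect-product splitting is built out of inner automorphisms that are themselves blockwise.

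For item \ref{it2}, I would consider the standard inclusion $j_n \colon GL_n(\da_\Delta) \hookrightarrow GL_{n+1}(\da_\Delta)$, $g \mapsto \diag(g,1)$. Then $j_n(g)\,\oo_{\da_\Delta}^{\,n+1} = g\oo_{\da_\Delta}^{\,n} \oplus \oo_{\da_\Delta}$, and the torsor $\mu(\oo_{\da_\Delta} \mid \oo_{\da_\Delta})$ is canonically trivial (the identity lattice gives a distinguished section). The same tensor-product property of Haar measures therefore gives a canonical isomorphism $\mu(\oo_{\da_\Delta}^{\,n+1} \mid j_n(g)\oo_{\da_\Delta}^{\,n+1}) \simeq \mu(\oo_{\da_\Delta}^{\,n} \mid g\oo_{\da_\Delta}^{\,n})$, which lifts $j_n$ to an embedding $\widehat{GL_n(\da_\Delta)}_{\dr_+^*} \hookrightarrow \widehat{GL_{n+1}(\da_\Delta)}_{\dr_+^*}$ restricting to the identity on the central $\dr_+^*$. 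Passing to the direct limit over $n$ produces the required central extension of $GL(\da_\Delta)$.

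The main obstacle in both parts is not the existence of the canonical torsor identifications but the verification of their strict compatibility: with the cocycle for item \ref{it1}, and with the successive stabilizations $j_n$ for item \ref{it2}. In each case this comes down to the Fubini-type fact that the pushforward of a product Haar measure by a block-diagonal linear automorphism is the product of the pushforwards on the factors, which I would verify by choosing base sections $(g,1_g)$ and computing directly.
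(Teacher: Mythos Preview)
Your proposal is correct and follows essentially the same route as the paper: reduce both items to the $\widetilde{(-)}$ extensions, then use the canonical tensor decomposition $\mu(V_1 \oplus V_2) \simeq \mu(V_1) \otimes \mu(V_2)$ of Haar-measure torsors, invoking the trivial torsor on the extra $\oo_{\da_\Delta}$-summand for item~\ref{it2}. The one point the paper isolates that you leave implicit is the symmetry $\mu_{V_1,V_2}(v_1 \otimes v_2) = \mu_{V_2,V_1}(v_2 \otimes v_1)$ for a direct sum of locally compact abelian groups; this is precisely what makes your ``blockwise'' reordering of tensor factors in the cocycle check go through, and is worth stating once since the analogous statement for determinant lines (rather than measure torsors) fails up to a sign.
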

\begin{proof} Clearly, it is sufficient to
prove similar results for the central extensions (and groups) $ \widetilde{GL_n (\da_{\Delta})}_{\dr_+^*} $.
Item~\ref{it1} follows from the construction of  central extension~\eqref{ext1} and the following properties.
If
$$ 0 \lto V_1 \lto V \lto V_2 \lto 0 $$ is an exact sequence of locally compact abelian
groups, where all morphisms are continuous, and
$ V_1 \hookrightarrow V $ is a closed embedding, then there is a canonical
isomorphism $ \mu_{V_1, V_2} $ of the following $ \dr_+^* $-torsors:
$$ \mu(V_1) \otimes \mu(V_2) \lto \mu (V) \mbox{.} $$
Here, if $ V = V_1 \oplus V_2 $, then for any elements $ v_1 \in \mu (V_1) $, $ v_2 \in \mu (V_2) $
there is an equality\footnote{We note
that if we assume that $ V_i $ $ (1 \le i \le 2) $ are finite-dimensional
vector spaces over a field and replace $ \mu(V_i) $ by $ \det V_i = \wedge^{\rm max} (V_i) $, then the analog of  equality~\eqref{imp}
will be true only up to a sign.}
\begin{equation}
\label{imp} \mu_{V_1, V_2} (v_1 \otimes v_2) = \mu_{V_2, V_1}
(v_2 \otimes v_1) \mbox{.}
\end{equation}

For the proof of item~\eqref{it2} is sufficient to show that if $ 1 \le n_1 \le n_2 $, then the central extension
$ \widetilde{GL_ {n_1} (\da_{\Delta})}_{\dr_+^*} $ is obtained from the central extension  $ \widetilde{GL_{n_2} (\da_{\Delta})}_{\dr_+^*} $
by restriction
 to the subgroup $ GL_{n_1} (\da_{\Delta}) $ of the group $ GL_{n_2} (\da_{\Delta}) $ in the image of the map
 $ \theta $. This follows from the construction of the central extension. The proposition is proved.
\end{proof}

\subsection{The case when the set $ \Delta $ is a singleton} \label{dv}
If the set $ \Delta $ is a singleton $ \{x \in C \} $,
then $ \da_{\Delta} = K_{x, C} $ is a finite product of two-dimensional local
fields. The local ring $ \oo_x $ of the  closed point $ x $ on $ X $ and the
completion  $ \hat{\oo}_x $ of this local ring at the maximal ideal
are normal rings without zero divisors\footnote{We recall that we assumed that $ X $ is an integral two-dimensional normal scheme of finite type
over $ \dz $.}. Let $ \eta_C $ be a height $ 1 $ prime
ideal  of the ring $ \oo_x $ which is defined by the curve $ C $. Let
$ \eta_i $ ($ 1 \le i \le m $) be all height $ 1 $ prime ideals of the
ring $ \hat{\oo}_x $ which contain the ideal $ \eta_C \hat{\oo}_x $.
Let $ K_i $ ($ 1 \le i \le m $) be a two-dimensional local field which is
defined as the completion of the field $ \Frac (\hat{\oo}_x) $ over the discrete
valuation associated with the ideal of $ \eta_i $. Then we have
\begin{equation} \label {proiz}
\da_{\Delta} = K_{x, C} = \prod_{i = 1}^m K_i \mbox{.}
\end{equation}

 Let $ K $ be a two-dimensional local field which is
one of the factors in the product~\eqref{proiz}. Let
$ \df_q $ be
  the last residue field of the field $ K $. By considering the corresponding filtrations on the group $ K^n $ and on its subquotients,
  it is not difficult to see that $ \dr_+^* $-torsors which appear in the construction of  central extension~\eqref{ext1}
  (after its restriction to the subgroup $ GL_n (K) $)
 come from  $ q^{\dz} $-torsors. Therefore the restrictions of central extensions~\eqref{ext1} and~\eqref{ext2} to the subgroup
 $ GL_n (K) $ arise from the  elements\footnote{A more precise statement will be obtained later in  proposition~\ref{pr}.} of
 $ H^2 (GL_n (K)), \dz) $
 after applying the map $ \dz \lto \dr_+^* \, : \, a \mapsto q^a $. We note that in contrast to the case of a one-dimensional local field,
 in the case of the two-dimensional local field $ K $ central extensions~\eqref{ext1} and~\eqref{ext2} with $ n> 1 $ are not obtained from $ n = 1 $ using the map
 $ \det \,: \, GL_n (K) \to K^* $.

 \begin{nt}  {\em If $ X $ is surface over a finite field $ \df_q $,
and $ \Delta $ is any subset, then  central  extensions~\eqref{ext1}
and~\eqref{ext2}  are also obtained from the elements of $ H^2 (GL_n
(\da_{\Delta}), \dz) $ as above, since  to construct these central
extensions it is  sufficient to consider the filtrations  by $ \df_q
$-spaces on the space $ \da_{\Delta}^n $ and on its subfactors.
 (See also~\cite{O}.)}  \end{nt}

 We denote  by $ \widehat{GL_n (K)}_{\dr_+^*} $ and by $
\widetilde{GL_n (K)}_{\dr_+^*} $ the restrictions of central
extensions~\eqref{ext1} and~\eqref{ext2} to the subgroup $ GL_n (K)
$ of the group $ GL_n (\da_{\Delta}) $. (We note that from the
construction it follows immediately that if $\da_{\Delta} = \prod_{i
= 1}^m K_i $, then $ GL_n (\da_{\Delta}) = \prod_{i = 1}^m GL_n
(K_i) $, and the central extensions $ \widehat{GL_n
(\da_{\Delta})}_{\dr_+^*} $ and $ \widetilde{GL_n
(\da_{\Delta})}_{\dr_+^*} $ are the Baer sums  on all $ 1 \le i \le
m $ of the central extensions $ \widehat{GL_n (K_i)}_{\dr_+^*} $ and
$ \widetilde{GL_n (K_i)}_{\dr_+^*} $, respectively.)  We  determine
now the element of the group $ \Hom (K_2(K), \dr_+^*) $ which
corresponds to a central extension  $ \widehat{GL_2 (K)}_{\dr_+^*} $
for a two-dimensional local field $ K $ by virtue of the isomorphism
from proposition~\ref{prop-coh}. Let us define a map $ \nu_K (\cdot,
\cdot): K^* \times K^* \lto \dz $ by the following formula
\begin{equation} \label{cel}
\nu_K (f, g) = \nu_{\bar{K}} \left(\overline{\frac{f^{\nu_K (g)}}{g^{\nu_K (f)}}} \right) \mbox{,}
\end{equation}
where $ \nu_K: K^* \to \dz $ and $ \nu_{\bar{K}}: \bar{K}^* \to \dz $ are the
discrete valuations. Since the expression in brackets is the tame symbol without the sign, then $ \nu_K (\cdot, \cdot) $ is a map from
the group $ K_2 (K) $ to  the group $ \dz $.
\begin{prop} \label{pr}
Let $ K $ be  a two-dimensional local field with the  last residue
field $ \df_q $. The central extension $ \widehat{GL_2
(K)}_{\dr_+^*} $ is obtained from the central extension $
\widehat{GL_2 (K)} $ (see  exact sequence~\eqref{cenext}) by means
of the map $ q^{- \nu_K (\cdot, \cdot)} \, : \, K_2 (K) \to \dr_+^*
$.
\end{prop}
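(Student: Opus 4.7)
By Proposition~\ref{prop-coh}, the class of $\widehat{GL_2(K)}_{\dr_+^*}$ in $H^2(GL_2(K), \dr_+^*)$ decomposes as a pair $(c, \chi) \in H^2(K^*, \dr_+^*) \oplus \Hom(K_2(K), \dr_+^*)$. Since $\widehat{GL_2(K)}_{\dr_+^*}$ splits canonically over the upper-left copy of $K^*$ by its very construction in~\S\ref{constr}, we have $c = 0$, and it suffices to prove $\chi(x,y) = q^{-\nu_K(x,y)}$ for all $x,y \in K^*$.

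By formula~\eqref{equ} for the universal symbol, $\chi(x,y)$ equals the commutator $[\hat A, \hat B] \in \dr_+^*$ of any lifts to $\widehat{GL_2(K)}_{\dr_+^*}$ of the commuting diagonal matrices $A = \diag(y,1)$ and $B = \diag(1,x)$. In the torsor realisation of~\S\ref{constr} I would take $\hat A = (A, \mu_A)$, $\hat B = (B, \mu_B)$ with $\mu_A \in \mu(\oo_K^2 \mid A\oo_K^2)$ and $\mu_B \in \mu(\oo_K^2 \mid B\oo_K^2)$; the multiplication rule $(g_1, \mu_1)(g_2, \mu_2) = (g_1 g_2, \mu_1 \otimes g_1 \mu_2)$ then expresses $[\hat A, \hat B]$ as the unique $\lambda \in \dr_+^*$ for which
\[
\mu_A \otimes A(\mu_B) \;=\; \lambda \cdot \bigl(\mu_B \otimes B(\mu_A)\bigr)
\]
inside the common $\dr_+^*$-torsor $\mu(\oo_K^2 \mid y\oo_K \oplus x\oo_K)$.

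To evaluate $\lambda$ I would factor this target torsor via the two different chains of intermediate submodules $y\oo_K \oplus \oo_K$ and $\oo_K \oplus x\oo_K$; after applying the canonical direct-sum identifications~\eqref{imp} for Haar torsors, both sides are transported into $\mu(\oo_K \mid y\oo_K) \otimes \mu(\oo_K \mid x\oo_K)$, but via subquotients carrying distinct $C_1^{\rm fin}$-structures over $\bar K$. The residual scalar is the obstruction to the symmetry of the generalised Haar torsors on these $\bar K$-module subquotients, and it equals the $q$-absolute value on $\bar K$ of the reduction $\overline{x^{\nu_K(y)}/y^{\nu_K(x)}} \in \bar K^*$, which is precisely $q^{-\nu_K(x,y)}$ by definition~\eqref{cel}. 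The main obstacle will be the torsor bookkeeping in this last step; since both sides are bimultiplicative in $(x,y)$ (automatic for $\chi$, which factors through $K_2(K)$), the verification can be reduced to checking on generators $(\pi_K, u)$ and $(u, \pi_K)$ with $u \in \oo_K^*$, where a short direct computation on $\oo_K/\pi_K\oo_K \simeq \bar K$ reproduces the expected power $q^{\pm \nu_{\bar K}(\bar u)}$.
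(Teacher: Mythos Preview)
Your reduction to the commutator $<\diag(y,1),\diag(1,x)>$ via formula~\eqref{equ} and Proposition~\ref{prop-coh} is correct, but the computation that follows has a genuine gap: you silently pass from $\widehat{GL_2(K)}_{\dr_+^*}$ to the torsor model, which is the group $\widetilde{GL_2(K)}_{\dr_+^*}$ of \S\ref{constr}. These are \emph{different} central extensions (the tilde extension does \emph{not} split over the upper-left $K^*$, as noted in \S\ref{constr}), and the commutator of $\diag(y,1)$ with $\diag(1,x)$ is exactly where they disagree. In $\widetilde{GL_2(K)}_{\dr_+^*}$ the symmetry property~\eqref{imp} for Haar torsors makes the commutator of diagonal matrices factor componentwise, so there one gets
\[
<\diag(y,1),\diag(1,x)> \;=\; <y,1>\cdot<1,x> \;=\; 1,
\]
not $q^{-\nu_K(x,y)}$. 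Your ``obstruction to the symmetry of the generalised Haar torsors'' does not exist: formula~\eqref{imp} and its footnote say precisely that, unlike determinant lines, Haar-measure torsors are strictly commutative under direct sum. So your torsor bookkeeping, carried out honestly, yields the trivial scalar.

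The paper fixes this by first writing $\diag(1,x)=\diag(x,1)\cdot\diag(x^{-1},x)$ and using bimultiplicativity together with the splitting of $\widehat{GL_2(K)}_{\dr_+^*}$ over $K^*$ to kill the factor $<\diag(y,1),\diag(x,1)>=1$. What remains is $<\diag(y,1),\diag(x^{-1},x)>$ with $\diag(x^{-1},x)\in SL_2(K)$; since $\widehat{GL_2(K)}_{\dr_+^*}$ is the semidirect product $\theta^{-1}(SL_2(K))\rtimes K^*$ with $K^*$ acting by inner automorphisms of $\widetilde{GL_2(K)}_{\dr_+^*}$, this last commutator may now legitimately be computed in the torsor model. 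The componentwise property then gives $<y,x^{-1}>$ in $\widetilde{GL_1(K)}_{\dr_+^*}$, and a short bimultiplicative check on $K^*=t^{\dz}\cdot\oo_K^*$ yields $q^{\nu_K(y,x)}=q^{-\nu_K(x,y)}$. The algebraic manoeuvre $\diag(1,x)\mapsto\diag(x^{-1},x)$ is the missing idea in your argument.
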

\begin{proof}
According to  formula~\eqref{equ}, it is enough to verify that for any $ x, y \in K^* $ the following formula is satisfied
$$
< \diag (y, 1), \, \diag (1, x)> = q^{\nu_K (y, x)} = q^{- \nu_K (x, y)} \mbox{,}
$$
where $ <\cdot, \cdot> $ is the commutator of the  lifting of two commuting elements from the group $ GL_2 (K)  $
to the group $ \widehat{GL_2 (K)}_{\dr_+^*} $.
Indeed, we have
\begin{multline*}
<\diag (y, 1), \, \diag (1, x)> = <\diag (y, 1), \, \diag (x, 1) \diag (x^{-1}, x) > = \\
= <\diag (y, 1), \, \diag (x, 1)> <\diag (y, 1), \, \diag (x^{-1}, \, x)> = \\
= <\diag (y, 1), \, \diag (x^{-1}, \, x)> \mbox{.}
\end{multline*}
Here we used the  bimultiplicativity of the map $ <\cdot, \cdot> $
(see, for example,~\cite[Prop.~6]{O}). In addition, $ <\diag (y, 1),
\, \diag (x, 1)> = 1 $, because  the central extension $
\widehat{GL_2 (K)}_{\dr_+^*} $ splits over the subgroup  $ K^* $ of
the group $ GL_2 (K)_{\dr_+^*} $. Since, by construction, the group
$ \widehat{GL_2 (K)}_{\dr_+^*} $ is a semidirect product, then
\linebreak $ <\diag (y, 1), \, \diag (x^{-1}, \, x)> $ can be
calculated in the group $ \widetilde{GL_2 (K)}_{\dr_+^*} $ (the
answer is the same.) From the construction of  central
extension~\eqref{ext1} and from formula~\eqref{imp} it is clear that
the commutator of the lifting of  diagonal matrices can be
calculated componentwise (for each place on the diagonal). We then
take the product on both components for the answer. Therefore
$$ <\diag (y, 1), \, \diag (x^{-1}, \, x)> = <y, \, x^{-1}> \mbox{,} $$
where $ <y,\, x^{-1}> $ is calculated in the group $
\widetilde{GL_1(K)}_{\dr_+^*} $. Now the formula
\begin{equation} \label{vych}
<y, \, x^{-1}> = q^{\nu_K (y, x)}
\end{equation}
 can be verified by means of the bimultiplicativity of the map $
<\cdot, \, \cdot> $ and by means of the decomposition $ K^* =
t^{\dz} \cdot \oo_K^* $, where $ t $ is a local parameter in the
field $ K $ (compare also with the proof of theorem~1 in~\cite{O}).
The proposition is proved.
\end{proof}

\begin{nt} \label{pro} {\em
It is not difficult to see that for $ n \ge 2 $ the transition from the central extension $ \widetilde{GL_n (K)}_{\dr_+^*} $ to the central extension
$ \widehat{GL_n (K)}_{\dr_+^*} $ (see the construction in section~\ref{constr} above) corresponds, on a level of the second cohomology group, to the projection to the second summand in the right-hand side of formula~\eqref{gen}. More concrete calculations in $ K $-groups will be made during the proof of theorem~\ref{t1} below. }
\end{nt}

\subsection{Central extensions on arithmetic surfaces when  the archimedean valuations are taken into account} \label{ar}
Any number field has the points at infinity: the archimedean valuations. The full (arithmetic)  adelic ring of a number field takes into account these valuations.

 Let $ X $ be a two-dimensional normal integral scheme of finite type over $ \dz $ such that there is a proper surjective morphism to $ \Spec \dz $.
In this case we {\em call} $ X $  as an arithmetic surface.
Let $ X_{\dqq} = X \otimes_{\Spec \dz} \Spec \dqq $ be the generic fiber. For any closed point $ p \in X_{\dqq} $ we define  rings
$$ K_p \widehat{\otimes} \dr = (\dqq (p) \otimes_{\dqq} \dr) ((t_p)), \qquad \oo_{K_p} \widehat{\otimes} \dr = ( \dqq (p) \otimes_{\dqq} \dr) [[t_p]] \qquad \mbox{,} $$
where $ \dqq (p) $ is the residue field of the point $ p $ on the curve $ X_{\dqq} $, and
 $ t_p $ is a local parameter at the point $ p $ on the curve $ X_{\dqq} $ (we note that the local ring of the point $ p $ on the one-dimensional scheme  $ X_{\dqq} $ is a discrete valuation ring). We now define the arithmetic adelic ring $ \da_X^{ar} $
 (see also~\cite[example~11]{OP}) as
 \begin{equation} \label{addecom}
 \da_X^{\rm ar} = \da_X \times \da_{X, \infty} \mbox {,} \qquad \mbox{where
 the ring} \quad \da_{X, \infty} =
\mathop{\prod\nolimits'}_{p \in X_{\dqq}} (K_p \widehat {\otimes}
\dr)
 \end{equation}
is  the restricted product with respect to the subrings $ \oo_{K_p}
\widehat{\otimes} \dr $. We  note that the closed points $ p \in
X_{\dqq} $ are in one-to-one correspondence with the integral
 one-dimensional subschemes of $ X $ such that these subschemes map surjectively
 onto $ \Spec \dz $ (they are the horizontal arithmetic curves on
$ X $). In addition, $ \dqq (p) {\otimes_{\dqq}} \dr = \prod_i L_i $, where
the product is over all equivalence classes of archimedean
valuations of the field $ \dqq (p) $, and the field $ L_i $ is isomorphic either to the field $ \dr $
or to the field $ \dc $. (We note also that the ring $ \da_{X, \infty} $
is a subring in the adelic ring of the curve $ X_{\dr} = X_{\dqq}
\otimes_{\dqq} \dr $  such that the nonzero components of this subring exactly
 correspond  to the components of the adelic ring of the curve $ X_{\dr} $ which come from  the algebraic (not transcendental) points
of the curve $ X_{\dr} $, i.e come from the closed points of the curve $ X_{\dr} $ which are mapped   to the closed points
 under the natural map
$ X_{\dr} \to X_{\dqq} $.) Therefore the definition of the ring $ \da_X^{\rm ar} $
can be interpreted as an addition of the fields
$ \dr((t_p)) $ or $ \dc ((t_p)) $ to the scheme adeles of $ X $. These fields  correspond to the equivalence classes
of archimedean valuations on irreducible horizontal
arithmetic curves and have to satisfy  the adelic condition along the curve
$ X_{\dqq} $.

Let $ \Delta $ be any subset of the set of closed points on $ X_{\dqq} $. Let us define the rings as
$$ \da_{\Delta, \infty} =
\mathop{\prod \nolimits'}_{p \in \Delta} (K_p \widehat{\otimes} \dr)
\qquad \mbox{and} \qquad \oo_{\da_{\Delta, \infty}} = \prod_{p \in
\Delta} (\oo_{K_p} \widehat{\otimes} \dr) \mbox {.}
$$
Similarly to the constructions from section~\ref{constr},   central
extensions\footnote{We note  that the central extension $
\widetilde{GL_n (\da_{\Delta, \infty})}_{\dr_+^*} $ is also obtained
from a central extension of the group $ GL_n (\da_{\Delta, \infty})
$ by the  group $ \dr^* $, which was constructed by E.~Arbarello,
C.~De~Concini and V.~G.~Katz in~\cite{ADK}, by means of the map $
\dr^* \to \dr_+^*: x \mapsto \mid x^{-1} \mid $ which is applied to
the kernel of the central extension. For this  we have to consider
the space $ \da_{\Delta, \infty}^n $ as a vector space over the
field $ \dr $ with the canonical action of the group $ GL_n
(\da_{\Delta, \infty}) $. Then we note that for any
finite-dimensional $ \dr $-vector space $ V $, there is a canonical
identification of $ \dr_+^* $-torsors $ \mid \det (V)^* \setminus 0
\mid $ and $ \mu (V) $ by  integration of differential forms, where
the first $ \dr_+^* $-torsor  is obtained from the $ \dr^* $-torsor
$ \det (V)^* \setminus 0 $ by means of the norm map between the
structure groups of torsors.}  $ \widetilde{GL_n (\da_{\Delta,
\infty})}_{\dr_+^*} $ and $ \widehat{GL_n (\da_{\Delta,
\infty})}_{\dr_+^*} $ of the  group $ GL_n (\da_{\Delta, \infty}) $
by the group $ \dr_+^* $ are defined. From the construction it
follows  the canonical splitting of these central extensions over
the  subgroup  $ GL_n (\oo_{\da_{\Delta, \infty}}) $ of the group $
GL_n (\da_{\Delta, \infty}) $. In addition, for these central
extensions proposition~\ref{prope} and remark~\ref{pro} are
satisfied by similar reasons. An analogue of proposition~\ref{pr} is
formulated as follows.
\begin{prop} \label{pr2}
The central extensions  $ \widehat{GL_2 (\dr ((t)))}_{\dr_+^*} $ and $ \widehat{GL_2 (\dc ((t)))}_{\dr_+^*} $ are obtained from
central extensions~\eqref{cenext} by means of  maps
$$ K_2 (\dr ((t))) \to \dr_+^* \quad : \quad (f, g) \mapsto \left| \overline{\frac{f^{\nu (g)}}{g^{\nu (f)}}} \right|_{\dr} \mbox{,} $$
$$ K_2 (\dc ((t))) \to \dr_+^* \quad : \quad (f, g) \mapsto \left| \overline{\frac{f^{\nu (g)}}{g^{\nu (f)}}} \right|_{\dc}^{2}
\mbox{,}
$$ respectively, where $ \nu $ is  the discrete valuation, $ | \cdot |_{\dr} $ or $ | \cdot |_{\dc} $ are the usual absolute values in the fields
 $ \dr $ or $ \dc $.
\end{prop}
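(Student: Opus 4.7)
The plan is to mirror the proof of Proposition~\ref{pr}, the only substantive change being that the discrete valuation on a finite residue field is replaced by the module of multiplication on the archimedean residue field $\dr$ or $\dc$. By formula~\eqref{equ} it suffices, for each of $K = \dr((t))$ and $K = \dc((t))$, to compute the commutator $\langle \diag(y,1),\, \diag(1,x)\rangle$ of the liftings of commuting diagonal matrices in $\widehat{GL_2(K)}_{\dr_+^*}$ for arbitrary $y, x \in K^*$.

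First I run the same chain of reductions as in the proof of Proposition~\ref{pr}: bimultiplicativity of $\langle\cdot,\cdot\rangle$ combined with the splitting of $\widehat{GL_2(K)}_{\dr_+^*}$ over the upper-left embedded $K^*$ gives
$$
\langle \diag(y,1),\, \diag(1,x)\rangle = \langle \diag(y,1),\, \diag(x^{-1},x)\rangle;
$$
since $\widehat{GL_2(K)}_{\dr_+^*}$ is built as a semidirect product, this commutator can equivalently be computed inside $\widetilde{GL_2(K)}_{\dr_+^*}$; and the construction of~\eqref{ext1} together with formula~\eqref{imp} reduces the computation componentwise along the diagonal to
$$
\langle \diag(y,1),\, \diag(x^{-1},x)\rangle = \langle y, x^{-1}\rangle,
$$
with the right-hand side computed in $\widetilde{GL_1(K)}_{\dr_+^*}$.

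It then remains to evaluate $\langle y, x^{-1}\rangle$. Using bimultiplicativity and the decomposition $K^* = t^{\dz}\cdot \oo_K^*$, writing $y = t^a u$ and $x^{-1} = t^b v$ with $u,v \in \oo_K^*$, one obtains
$$
\langle y, x^{-1}\rangle = \langle t,t\rangle^{ab}\,\langle t,v\rangle^{a}\,\langle u,t\rangle^{b}\,\langle u,v\rangle.
$$
The factor $\langle t,t\rangle = 1$ is trivial, and the factor $\langle u,v\rangle$ vanishes because multiplication by elements of $\oo_K^*$ preserves the lattice $\oo_K$, so the relevant Haar-measure torsors carry canonical elements that are fixed. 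The surviving mixed factors are of the form $\langle t, c\rangle$ with $c \in \oo_K^*$; by tracing through the definitions of the torsors $\mu(\oo_K \mid t\oo_K) = \mu(\oo_K/t\oo_K)$ and the actions of $t$ and $c$ between them, $\langle t, c\rangle$ equals the module of the automorphism of the residue field $\oo_K/t\oo_K$ induced by multiplication by $\bar c$. For $K = \dr((t))$ this module is $|\bar c|_{\dr}$, and for $K = \dc((t))$ it is $|\bar c|_{\dc}^{2}$. Assembling the factors and simplifying $\bar v^{a}\bar u^{-b} = \overline{y^{\nu(x)}/x^{\nu(y)}}$ yields exactly the formulas in the statement.

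The main obstacle, as in the finite-residue-field analogue~\eqref{vych}, is the bookkeeping for the elementary commutator $\langle t, c\rangle$: one must carefully identify the induced torsor-automorphism with the module of multiplication on the residue field. Once this identification is in hand, the real-versus-complex dichotomy reduces to the standard fact that the module of multiplication by $z$ on $\dc$ is $|z|^{2}$, which is precisely what forces the squaring in the complex formula.
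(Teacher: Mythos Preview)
Your proposal is correct and follows exactly the approach the paper indicates: the paper's proof of Proposition~\ref{pr2} consists of the single sentence ``The proof of this proposition is similar to the proof of proposition~\ref{pr},'' and you have carried out precisely that adaptation, replacing the role of $q^{\nu_{\bar K}(\cdot)}$ by the Haar module of multiplication on the archimedean residue field (which accounts for the square in the complex case). There is nothing to add.
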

The proof of this proposition is similar to the proof of proposition~\ref{pr}.

We {\em define} a central extension $ \widehat{GL_n (\da_X^{\rm ar})}_{\dr_+^*} $ of  the group $ GL_n (\da_X^{\rm ar}) $  by the
group $ \dr_+^* $ as the Baer sum of
the central extensions  $ \widehat{GL_n (\da_X)}_{\dr_+^*} $ and $ \widehat{GL_n (\da_{X, \infty})}_{\dr_+^*} $ with respect to
decomposition~\eqref{addecom}.
 A central extension  $ \widetilde{GL_n (\da_X^{\rm ar})}_{\dr_+^*} $  is {\em defined}  similarly.

\subsection {Noncommutative reciprocity laws} \label{noncom}
Let $ X $ be a two-dimensional  normal integral scheme of finite
type over $ \dz $. For a closed point $ x \in X $  we {\em define} a
ring $ K_x $ as the localization of the ring $ \hat{\oo}_x $  with
respect to the multiplicative system $ \oo_x \setminus 0 $ (we
recall that $ \oo_x $ and $ \hat{\oo}_x $ are the local ring of the
point $ x $ on $ X $ and its completion by the maximal ideal,
respectively). For any integral one-dimensional subscheme $ C $ on $
X $ we {\em define} a field $ K_C $ as the completion of the field
of rational functions on $ X $ over a discrete valuation which is
given by the curve $ C $. The rings $ K_x $ and $ K_C $  appear
naturally in the theory of two-dimensional adeles when  we describe
the semilocal situation on $ X $, see~\cite{PF}. We have the
diagonal embedding of the ring $ K_x $ into the ring $ \da_X $ via
the embedding in all two-dimensional local fields which arise from
the integral one-dimensional subschemes on $ X $, passing through
the point $ x $. There is also the diagonal embedding of the ring $
K_C $ into the ring $ \da_X $ (or into the ring $ \da_X^{\rm ar} $
if $ X $ is an arithmetic surface) via the embedding  in all
two-dimensional local fields which arise from the points on the
curve $ C $ (here, in the case of an arithmetic surface $ X $ and a
horizontal arithmetic curve $ C $ we  must also embed  the field $
K_C $ into the archimedean part  $ \da_ {X, \infty} $, which takes
into account the archimedean points on $ C $). In addition, the
field of rational functions $ \df_q (X) $ (in the case of a surface
$ X $ over the field $ \df_q $) or the field of rational functions $
\dqq (X) $ (in the case of an arithmetic surface $ X $) are
diagonally embedded into the ring $ \da_X $ or into the ring $
\da_X^{\rm ar} $, respectively. We {\em denote} this field of
rational functions as $ K_X $.

\begin{Th} \label{t1}
Let $ X $ be an integral two-dimensional normal scheme of finite type over $ \dz $ which is either a projective surface
over $ \df_{q} $
or an arithmetic surface. Let $ \da $ be a ring $ \da_X $ (in the case of a surface over $ \df_{q} $) or
be a ring $ \da_{X}^{\rm ar} $
(in the case of an arithmetic surface.) Then the following non-commutative reciprocity laws hold. For any
$ n \ge 1 $ the central extension $ \widehat{GL_n (\da)}_{\dr_+^*} $ of
 the
  group
  $ GL_n (\da) $
   by
    the
     group
      $ \dr_+^* $  splits canonically
 over the following subgroups:
$ GL_n (K_X) $, $ GL_n (K_x) $, $ GL_n (K_C) $, where $ x $ is any closed point on $ X $, and $ C $ is any integral one-dimensional
subscheme of the scheme $ X $.
\end{Th}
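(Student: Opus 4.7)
The plan is to show that for each $F \in \{K_X, K_x, K_C\}$ the restriction of $\widehat{GL_n(\da)}_{\dr_+^*}$ to $GL_n(F)$ is a trivial central extension, and to upgrade the triviality to a canonical splitting. By remark~\ref{dec-coh}, the class of this restriction lies in $H^2(F^*, \dr_+^*) \oplus \Hom(K_2(F), \dr_+^*)$. The first summand vanishes immediately: by the construction in \S\ref{constr} the extension $\widehat{GL_n(\da)}_{\dr_+^*}$ already splits over $\da^*$ embedded in the upper-left corner, and hence over $F^* \subset \da^*$. So only the $\Hom(K_2(F), \dr_+^*)$-component needs to be killed.

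To identify this component, use the diagonal embedding $F \hookrightarrow \da$, which factors through the product of two-dimensional local fields $K_{x,C}$ (and, in the arithmetic case, the archimedean factors $K_p \widehat{\otimes} \dr$). Proposition~\ref{prope}~(\ref{it1}) expresses the restriction of $\widehat{GL_n(\da)}_{\dr_+^*}$ to this product as the Baer sum of its local pieces, so the induced homomorphism $K_2(F) \to \dr_+^*$ is the product of the corresponding local symbols. By propositions~\ref{pr} and~\ref{pr2}, each local symbol is $q_x^{-\nu_{K_{x,C}}(\cdot,\cdot)}$ at a non-archimedean pair $(x,C)$, and $|\,\cdot\,|_{\dr}$ or $|\,\cdot\,|_{\dc}^{2}$ applied to the tame symbol at a real or complex archimedean factor, respectively.

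Triviality of the total symbol on the three subgroups thus reduces to three concrete reciprocity identities for $f,g \in F^*$. For $F = K_x$ one needs $\sum_{C \ni x} \nu_{K_{x,C}}(f,g) = 0$, the Parshin reciprocity law around the closed point $x$. For $F = K_C$ one needs the vanishing of the analogous weighted sum over the closed points of $C$, together with the archimedean contributions when $C$ is a horizontal arithmetic curve; the latter is the product formula applied to the global field of functions on $C$. For $F = K_X$ the required identity is the global Parshin reciprocity on $X$, which (in the arithmetic case, with the archimedean terms of proposition~\ref{pr2}) follows from combining the previous two. These reciprocity theorems, and in particular their arithmetic version where the $q_x$-powers must cancel against the absolute-value factors coming from archimedean places, I expect to be the main obstacle: each is known, but matching them precisely to the $\dr_+^*$-valued symbol produced by the measure-theoretic construction of \S\ref{constr}--\S\ref{ar} requires careful bookkeeping.

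For canonicity, once the cohomology class vanishes I argue as follows. For $n \ge 2$ the group $SL_n(F)$ is perfect, so $\Hom(SL_n(F), \dr_+^*) = 0$ and any splitting over $\widehat{SL_n(F)}_{\dr_+^*}$ is unique. In the decomposition $GL_n(F) = SL_n(F) \rtimes F^*$ with $F^*$ sitting in the upper-left corner, the $F^*$-piece must coincide with the already canonical splitting over $\da^*$ provided by the construction. These two compatible pieces assemble into the unique, and hence canonical, splitting of $\widehat{GL_n(\da)}_{\dr_+^*}$ over $GL_n(F)$, yielding the theorem.
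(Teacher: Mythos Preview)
Your approach is essentially the one in the paper: both reduce the splitting over $GL_n(F)$ to showing that the induced homomorphism $K_2(F)\to\dr_+^*$ is trivial, and both verify this via the reciprocity law around a point (for $F=K_x$), the product formula on the curve (for $F=K_C$), and the combination of such product formulas for $F=K_X$; canonicity is then deduced from perfectness of $SL_n(F)$ together with the built-in splitting over $F^*\subset\da^*$.

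A few places where the paper is more explicit than your outline and which you should fill in. First, rather than invoking the cohomological decomposition abstractly, the paper computes the relevant symbol as the commutator $\langle\diag(u,u^{-1},1),\diag(v,1,v^{-1})\rangle$ in $\widetilde{SL(F)}_{\dr_+^*}$ and reduces it to $\langle u,v\rangle$ in $\widetilde{GL_1(F)}_{\dr_+^*}$; this makes the link to Propositions~\ref{pr} and~\ref{pr2} independent of $n$. Second, for $F=K_x$ the paper does not work directly with $K_x$ but first proves the splitting over $GL_n(\Frac(\hat{\oo}_x))$ using the adeles $\hat{\da}_x$ of $\Spec\hat{\oo}_x$, so that the Gersten--Quillen argument for Milnor $K$-theory applies cleanly; the extra height-$1$ primes of $\hat{\oo}_x$ not coming from curves on $X$ contribute nothing because $K_x\subset\oo_{K_\eta}$ there. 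Third, your identity ``$\sum_{C\ni x}\nu_{K_{x,C}}(f,g)=0$'' should be weighted by $\log_{q_x}(q_K)$; as written it is not the correct reciprocity statement. Finally, the passage from an infinite adelic product of local symbols to a finite one (what you call ``careful bookkeeping'') is handled in the paper by an explicit decomposition $\hat{\da}_x=\da_1\oplus\da_2$ with $\da_1$ a finite product of local fields and $u,v\in\oo_{\da_2}^*$, so that the $\da_2$-contribution vanishes by the canonical splitting over $GL_n(\oo_{\da_2})$.
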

\begin{proof}
We first prove the splitting of the central extension  $ \widehat{GL_n (\da)}_{\dr_+^*} $ over the group $ GL_n (K_x) $.

We consider the ring of adeles $ \da_x $ of the scheme $ \Spec {\oo_x} $. This adelic ring
 contains only two-dimensional local fields that come
from the integral one-dimensional subschemes of $ X $ which pass
through the point $ x $. We note that we actually only use the ring
$ \da_x $ for the construction of the restriction of the central
extension to $ GL_n (K_x) $. We consider now the ring of adeles $
\hat{\da} _x $  of the scheme $ \Spec \hat{\oo_x} $. The  ring  $
\hat{\da}_x $ contains the ring $ \da_x $ as a direct factor. But
the ring $ \hat{\da}_x $ also contains  two-dimensional local fields
which are obtained from the height $ 1 $ prime ideals  $ \eta $  of
the ring $ \hat{\oo}_x $ such that $ \eta \cap \oo_x = 0 $. We note
that if $ K_ {\eta} $ is such a two-dimensional local field, then $
K_x \subset \oo_{K_{\eta}} $ (we recall that $ \oo_{K_{\eta}} $ is
the discrete valuation ring of the field $ K_{\eta} $).

Central extensions  $ \widetilde{GL_n (\hat{\da}_x)}_{\dr_+^*} $ and
$ \widehat{GL_n (\hat{\da}_x)}_{\dr_+^*} $ of the group $ GL_n
(\hat{\da}_x) $ by the group $ \dr_+^* $ are constructed  as in
section~\ref{constr}. We first show that the latter central
extension
 splits over the subgroup
$ GL_n (\Frac (\hat{\oo}_x)) $, where $ \Frac (\hat{\oo_x}) $  is
the quotient field of the ring $ \hat{\oo_x} $. It follows from the
construction of the central extension and from the fact that the
group $ SL_n (\Frac (\hat{\oo_x})) $ is perfect  that it is
sufficient to prove\footnote{Indeed, the group $ \widehat{GL_n
(\hat{\da}_x)}_{\dr_+^*} $  is a semidirect product of a group,
which is the restriction of the central extension $ \widetilde{GL_n
(\hat{\da}_x)}_{\dr_+^*} $ to the subgroup $ SL_n (\hat{\da}_x) $,
and of the group $ \hat{\da}_x^* $.   Since the group $ SL_n (\Frac
(\hat{\oo_x})) $ is perfect,  any section of the central extension
over  the  subgroup $ SL_n (\Frac (\hat{\oo_x})) $ is invariant
under the action of an automorphism given by an element of the group
   $ \Frac (\hat{\oo_x})^* $ (we need this automorphism to define
the semidirect product). Therefore this   section together with the
identity  section  over the  group $ \Frac (\hat{\oo_x})^* $ gives
the section  of the  central extension  $ \widehat{GL_n
(\hat{\da}_x)}_{\dr_+^*} $ over the subgroup   $ GL_n (\Frac
(\hat{\oo}_x)) = SL_n (\Frac (\hat{\oo}_x)) \rtimes \Frac
(\hat{\oo_x})^* $.} the  splitting of the central extension  $
\widehat{GL_n (\hat{\da}_x})_{\dr_+^*} $ over the  subgroup $ SL_n
(\Frac(\hat{\oo_x})) $. Let us denote the restriction of the central
extension  $ \widehat{GL_n ( \hat{\da}_x})_{\dr_+^*} $ to the
subgroup $ SL_n (\Frac (\hat{\oo_x})) $ as $ \widehat{SL_n (\Frac
(\hat{\oo_x}))}_{\dr_+^*} = \widetilde{SL_n (\Frac
(\hat{\oo_x}))}_{\dr_+^*} $.  A central extension  $ \widehat{SL
(\Frac (\hat{\oo_x}))}_{\dr_+^*} = \widetilde{SL (\Frac
(\hat{\oo_x}))}_{\dr_+^*} $ of the group $ SL (\Frac (\hat{\oo_x}))
$ by the group  $ \dr_+^* $ is well-defined by the constructed
central extensions. Because of the universality of central
extension~\eqref{uext}, the central extension
 $ \widetilde{SL (\Frac (\hat{\oo_x}))}_{\dr_+^*} $ is
obtained from $ St (\Frac (\hat{\oo_x})) $ by means of some  map $ K_2 (\Frac (\hat{\oo_x})) \to \dr_+^* $.
To calculate this map it is enough to count in the group
$ \widetilde{SL (\Frac (\hat{\oo_x}))}_{\dr_+^*} $ the following expression $ <\diag (u, u^{-1}, 1), \, \diag (v, 1, v^{-1})> $
for any element $ (u, v) \in
K_2 (\Frac (\hat{\oo_x})) $ (see the remark after corollary~11.3 in~\cite{Mi}).

It follows from the construction of the central extension  that the
commutator of the lifting of diagonal matrices can be calculated
separately for each place on the diagonal, and then we take the
product.  We find:
$$
<\diag (u, u^{-1}, 1), \, \diag (v, 1, v^{-1})> = <u,v> \mbox{,}
$$
where $ <u,v> $ is computed in the group $ \widetilde{GL_1 (\Frac (\hat{\oo_x}))}_{\dr_+^*} \subset
 \widetilde{GL_1 (\hat{\da}_x)}_{\dr_+^*} $.
To calculate\footnote{Compare also with the proof of  theorem~2
in~\cite{O}} $ <u,v> $  we decompose the ring $ \hat{\da}_x $ into a direct sum
of two subrings:
\begin{equation} \label{dp}
\hat{\da}_x = \da_1 \oplus \da_2 \mbox{,}
\end{equation}
 where the subring $ \da_1 $ is a finite product of two-dimensional local fields such that for each of these fields the corresponding discrete valuation of at least one of the elements of $ u $ and $ v $ is not equal to zero, and the subring $ \da_2 $  is the adelic product
of all  the rest of  two-dimensional local fields from the ring $ \hat{\da} _x $.
From an analogues of statement~\ref{it1} of proposition~\ref{prope} applied to
decomposition~\eqref{dp} it follows that \linebreak
$ <u,v> = <u,v>_1 \cdot <u,v>_2 $, where
the commutator $ <u,v>_i $ ($ 1 \le i \le 2 $) is computed in the group
$ \widetilde{GL_1 (\da_i})_ {\dr_+^*} $. Since $ u, v \in \oo_{\da_2}^* $
and the central extension $ \widetilde{GL_1 (\da_2})_{\dr_+^*} $
splits over the subgroup $ \oo_{\da_2}^* $, we have that $ <u,v>_2 = 1 $. Again
from an  analogues of statement~\ref{it1} of proposition~\ref{prope} it follows
that $ <u,v>_1 $ is a finite product of commutators such that  each commutator is calculated separately for the two-dimensional local field from the ring
$ \da_1 $.

Now, using calculation~\eqref{vych} from the proof of proposition~\ref{pr},  and also using the equality $ \nu_K (v, u) = 0 $ for elements
$ u, v \in \oo_K^* $, where $ K $ is a two-dimensional local field,
  we obtain that
  $$
  <u,v> = \prod_K q_K^{\nu_K (v, u)} \mbox{,}
  $$
  where $ K $ runs over all two-dimensional local fields which are the  components of the ring $ \hat{\da}_x $,
  and $ q_K $ is the number of elements in the last residue field of the field $ K $.

  The map $ \nu_K (\cdot, \cdot) $ is the composition of the
boundary maps in the Milnor $ K $-theory. Hence the  reciprocity law
around the point $ x $:   $$ \sum_K \nu_K (v, u) \log_{q_x} (q_K) =
0 \mbox{,} $$   where $ q_x $ is the number of elements in the
residue field of the point $ x $, follows from the analogue of the
Gersten-Quillen complex  for Milnor $ K $-theory  (see~\cite[prop.~1
]{Kat}).   Therefore the central extension  $ \widehat{GL_n
(\hat{\da}_x)}_{\dr_+^*} $ splits over the subgroup  $ GL_n (\Frac
(\hat{\oo}_x)) $.

  We note now that it follows from the construction  that the
restriction of the central extension $ \widehat{GL_n
(\da)}_{\dr_+^*} $ to the subgroup $ GL_n (K_x) $   coincides with
the restriction of the central extension $ \widehat{GL_n
(\hat{\da}_x)}_{\dr_+^*} $ to the same subgroup. As we have just
proved,  the latter central extension splits over this subgroup,
because $ GL_n (K_x) \subset GL_n (\Frac (\hat{\oo}_x)) $. Hence,
the central extension $ \widehat{GL_n (\da)}_{\dr_+^*} $ splits over
the subgroup  $ GL_n (K_x) $.

The proof of the splitting of the central extension $ \widehat{GL_n (\da)}_{\dr_+^*} $ over the subgroup
 $ GL_n (K_C) $ is analogous to the previous case. It is reduced to the proof of the splitting of the central
 extension $ \widetilde{SL (K_C)}_{\dr_+^*} $ over the group $ SL (K_C) $. For this purpose, the group $ St (K_C) $
  and the computation of the expression $ <u,v> $ in the group $ \widetilde{GL_1 (K_C)}_{\dr_+^*} $ for
$ (u, v) \in K_2 (K_C) $   are used. From the bimultiplicativity of
the expression $ <\cdot, \cdot> $, from the property $ <t,t> = <-1,
t> $ (which follows from the Steinberg  property) as well as from
the construction of the central extension $ \widetilde{GL_1
(K_C)}_{\dr_+^*} $, it is easy to see that the only non-trivial
 case which is necessary to calculate is $ <a, t> $,
where $ t $ is a local parameter of the field $ K_C $ and $ \nu_t
(a) = 0 $ ($ \nu_t $ is discrete
 valuation in the field $ K_C $). In this case it is easy to see that the element $ <a,t> $ coincides
 with the product over all the normalized absolute values of the field of rational functions on the
 one-dimensional subscheme  $ C $ such that these normalized absolute values are applied to the image
 of the element $ a $ in this field (compare with propositions~\ref{pr} and~\ref{pr2}). Therefore the
 product formula implies $ <a,t> = 1$. Consequently, $ <u,v> = 1 $ for all
$ (u, v) \in K_2 (K_C) $. Thus, we have obtained that the central
extension $ \widehat{GL_n (\da)}_{\dr_+^*} $ splits over the
subgroup  $ GL_n (K_C) $.

The proof of the splitting of the central extension $ \widehat{GL_n
(\da)}_{\dr_+^*} $ over the  subgroup  $ GL_n (K_X) $ is similar to
the case just analyzed. But we have to use the group $ St (K_X) $,
and for the proof  $ <u,v> = 1 $ (where $ (u, v) \in K_2 (K_X) $) we
have to use the product formulas for the normalized absolute values
of the fields of rational functions of a finite number of
one-dimensional integral subschemes on $ X $ (namely, of those
subschemes where the functions $ u $ and $ v $ have zeros or poles).
The theorem is proved.
\end{proof}

\begin{nt}{\em
The splitting of the central extension $ \widetilde{GL_n
(\da)}_{\dr_+^*} $  over the subgroups $ GL_n (K_x) $, $ GL_n (K_C)
$ and $ GL_n (K_X) $ follows immediately from theorem~\ref{t1}. (In
the case of the group $ GL_n (K_x) $, it is necessary first to work
with the central extensions of the group $ GL_n (\Frac
(\hat{\oo}_x)) $, as in the proof of the theorem.) Indeed, according
to remarks~\ref{dec-coh} and~\ref{pro}, the proof is at once reduced
to the case $ n = 1 $. Now from the reciprocity laws  around a point
and along an integral one-dimensional subscheme (we  used these
reciprocity laws above) it follows that the group in which the
commutator is computed and  where we prove the splitting is abelian.
Next, we note that $ \Ext^1_{\dz} (\cdot \ , \dr_+^*) = 0 $, since $
\dr_+^* $ is a divisible group. }
\end{nt}

\section{Unramified Langlands correspondence for two-dimensional local fields} \label{tlf}

\subsection{Unramified Langlands correspondence for one-di\-men\-si\-o\-nal local fields} \label{one-dim}
We first recall the construction for a one-dimensional local field $ K $, see  the surveys~\cite{PR} and~\cite {W}.

Let $ K $ be a one-dimensional local field with a finite residue field $ \df_q $. Let $ K^{\rm nr} $ be the maximal unramified extension of the field
 $ K $.
Then the Galois group $ \Gal (K^{\rm nr} / K) = \Gal (\bar{\df}_q / \df_q) = \hat{\dz} $ is topologically generated by the Frobenius
automorphism $ \rm Fr $.
The reciprocity map is constructed in the following way:
$$
K^* \lto \Gal (K^{\rm nr} / K) \quad : \quad f \longmapsto {\rm Fr}^{\nu_K (f)} \mbox {,}
$$
where $ \nu_K $ is the discrete valuation of the field $ K $.

Now let $ \rho $ be an unramified $ n $-dimensional complex semisimple representation of the  Weil group $ W_K \subset \Gal (K^{\rm ab} / K) $. Then this representation is a semisimple  representation of the group $ \dz $, and up to an isomorphism this representation is determined by a set of $ n $ nonzero complex numbers:
$ \alpha_1, \ldots, \alpha_n $, which are the eigenvalues of the lifting of the Frobenius automorphism $ \rm Fr $.

On the other hand, let us consider the group $ GL_n (K) $. Let $ B \subset GL_n (K) $ be the Borel subgroup of upper triangular matrices.
 We define a character
$ \chi_{\alpha_1, \ldots, \alpha_n}: B \lto \dc^* $ in the following way:
$$
 \chi \left ( \left(
\begin{matrix}
b_1 & * & * & * \\
    & b_2 & * & * \\
    && \cdot & \cdot \\
    &&& b_n
\end{matrix}
\right)
\right) = \alpha_1^{\nu_K (b_1)} \cdot \ldots \cdot \alpha_n^{\nu_K (b_n)} = (q^{- \nu_K (b_1)})^{a_1} \cdot \ldots \cdot
(q^{- \nu_K (b_n)})^{a_n} \mbox {,}
$$
where the element $ (a_1, \ldots, a_n) \in \dc^n / \left (\frac{2
\pi i}{\ln q} \right) \dz^n $ is defined by $ q^{-a_i } = \alpha_i
$, $ 1 \le i \le n $.

For every $ m \ge 1 $ we define a normal subgroup  $ C_m $ of the group $ GL_n (\oo_K) $:
$$ C_m = 1 + t_K ^{m} M_n (\oo_K) \mbox{.} $$
(The expression on the right hand side of this formula is considered in the ring $ n \times n $ matrices $ M_n (\oo_K) $. Here $ \oo_K $ is the discrete valuation ring of the field $ K $, and $ t_K $ is a local parameter.) We denote also $ C_0 = GL_n (\oo_K) $.

We now define  a representation, which is called the principal
series representation of the group $ GL_n (K) $, in the following
way:
$$ \pi (\alpha_1, \ldots , \alpha_n) = \Ind \nolimits^{GL_n (K)}_B (\chi_{\alpha_1, \ldots, \alpha_n}) \mbox{,} $$
where the space of the induced representation $ \Ind^{GL_n (K)}_B (\chi_{\alpha_1, \ldots, \alpha_n}) $ is
$$
\left\{
f: GL_n (K) \to \dc \left|
\begin{array}{l}
 (1) \quad f (bg) = \chi_{\alpha_1, \ldots, \alpha_n} (b) f (g) \mbox{,} \quad \forall b \in B \mbox{,} \quad \forall g \in GL_n (K) \\
 (2) \quad \exists n_f \ge 1 \; : \; f (hu) = f (h) \mbox{,} \quad \forall h \in GL_n (K) \mbox{,} \quad \forall u \in C_{n_f}
\end{array}
\right.
\right\} \mbox{.}
$$
The group $ GL_n (K) $ acts on this space by right translations, i.e. for any $ g \in GL_n (K) $ and  for any
$ f \in \Ind^{GL_n (K)}_B (\chi_{\alpha_1, \ldots, \alpha_n}) $ we have $ (gf) (x) = f (xg) $ for any element $ x \in GL_n (K) $.

\begin{nt} {\em
We  used  the non-normalized induction in the  definition of the representation $ \pi (\alpha_1, \ldots ,\alpha_n) $.
Usually, for the normalized induction one adds  to the character $ \chi_{\alpha_1, \ldots, \alpha_n} $ a character which  takes into account the non-unimodularity of the group $ B $.}
\end{nt}

\begin{nt} \label{dejst} {\em
The definition of the induced representation can be rewritten in the following way (this will be important in what follows.)
 We consider for every $ m \ge 0 $ an infinite-dimensional  $ \dc $-vector space $ V_m = \prod_{g \in GL_n (K) / C_m} \dc $,
 where the vectors from $ V_m $ are any functions $ f $ from the set of left cosets  $ GL_n (K) / C_m $ to the field $ \dc $.
Then the group $ B $ acts on the space $ V_m $ in the following way:
$$
b (f) (x) = \chi_{\alpha_1, \ldots, \alpha_n} (b) f (b^{-1} x) \mbox{,}
$$
where $ b \in B $, $ f \in V_m $, $ x \in GL_n (K) / C_m $. It is now clear that
$$
\pi (\alpha_1, \ldots , \alpha_n) =
 \bigcup_{m \ge 0} V_m^{B} \mbox{,}
$$
where $ V_m^{B} $ is the space of $B$-invariant elements. The group $ GL_n (K) $ acts on
the space $ \pi (\alpha_1, \ldots , \alpha_n) $ as follows: $ g (f) (x) = f (xg) $, where $ f \in V_m^ B $,
$ g, x \in GL_n (K) $, $ g (f) \in V_l $ for some $ l \ge 0 $ (which depends on $ m \ge 0 $ and $ g $).
}
\end{nt}

A representation  of the group $ GL_n (K) $ on a vector space $ W $ is called {\em smooth} if \linebreak $ W = \bigcup_{n \ge 0} W^{C_n} \mbox{,} $.

An irreducible representation of the group $ GL_n (K) $ on a vector space $ W $  is called {\em
spherical} (or {\em unramified}) if
$ W^{GL_n (\oo_K)} \ne
0$.

We note that by construction, $ \pi (\alpha_1, \ldots , \alpha_n) $ is a  smooth representation of the group $ GL_n (K) $. In addition,
$$ \dim_{\dc} \pi (\alpha_1, \ldots , \alpha_n)^{GL_n (\oo_K)} = 1 \mbox{.} $$

The representation  $ \pi (\alpha_1, \ldots , \alpha_n) $ of the
group $ GL_n (K) $  always admits  a unique spherical subquotient
such that this subquotient goes into an isomorphic subquotient under
a permutation of the complex numbers $ \alpha_1, \ldots, \alpha_n $.
(Moreover, it is always possible  to permute the complex numbers $
\alpha_1, \ldots, \alpha_n $ so that this subquotient will be a
quotient.) Now the unramified Langlands correspondence associates
the  representation $ \rho $ (see the beginning of this section)
with the above quotient representation of the group $ GL_n (K) $.
And all the spherical admissible\footnote{We don't recall here the
definition of an admissible representation.} representations of the
group $ GL_n (K) $ are  obtained in this way.

We note that the irreducible admissible representations of the group
$ GL_n (K) $  are exactly the  irreducible smooth representations of
the same group, see~\cite[Theorem~3.25]{BZ}. Therefore in the
preceding paragraph,  the spherical admissible representations of
the group $ GL_n (K) $ can be replaced   by  the spherical smooth
representations of the same group.

\subsection{An action of a group on a $ k $-linear category} \label {gract}
We recall the   definition of an action of a group $ G $ on a $ k $-linear category $ \cal B $, where $ k $ is a field (see
also~\cite{GK}).
\begin{defin} \label{catact}
An action of a  group $ G $ on a $ k $-linear category $ \cal B $ consists of the following data.
\begin{enumerate}
\item For every element $ g \in G $ there is a $ k $-linear functor $ \tau (g) \, : \, \cal B \to \cal B $.
\item For every pair of elements $ g, h \in G $ there is an isomorphism of functors $ \psi_{g, h} \, : \, \tau (g) \circ \tau (h)
\Rightarrow \tau (gh) $.
\item There is an isomorphism of functors $ \psi_1 \, : \, \tau (1) \Rightarrow \Id_{\cal B} $.
\end{enumerate}
Besides, the following conditions are satisfied.
\begin{enumerate}
\item For any $ g, h, k \in G $  the associativity holds: \\
$ \psi_{gh, k} (\psi_{g, h} \circ \tau (k)) =  \psi_{g, hk} (\tau (g) \circ \psi_{h, k}) $.
\item For any $ g \in G $   formulas $ \psi_{1, g} = \psi_1 \circ \tau (g) $ and $ \psi_{g, 1} = \tau (g) \circ \psi ( 1) $ hold.
\end{enumerate}
\end{defin}

If it is necessary to specify the category which we use,  we will use  notations $ \tau_{\cal B} (g) $, $ \psi_{{\cal B}, g, h} $ and
$ \psi_{{\cal B}, 1} $.

\begin{defin}
Let $ \cal B $ and $ \cal C $ be $ k $-linear categories with a $ G $-action. A $ G $-linear functor
$ \mathfrak{T} = (T, \ve) $ from $ \BB $ to $ \CC $ consists of the following data.
\begin{enumerate}
\item There is a $ k $-linear functor $ T \,: \, \BB \to \CC $.
\item For any element $ g \in G $ there is an isomorphism of functors \linebreak
$ \ve_g \,: \, T \circ \tau_{\BB} (g) \Rightarrow \tau_{\CC} (g) \circ T $.
\end{enumerate}
Besides, the following conditions are satisfied.
\begin{enumerate}
\item For any elements $ g, h \in G $ the following diagram is commutative:
\begin{diagram}
T \circ \tau_{\BB}(g)  \circ \tau_{\BB}(h)  &&
\rImplies^{ T \circ  \psi_{\BB, g, h} }
& & T \circ \tau_{\BB} (g h)  \\
\dImplies^{\ve_g \circ \tau_{\BB}(h)}
& & & &
\dImplies_{\ve_{gh}}  \\
\tau_{\CC}(g) \circ T \circ \tau_{\BB}(h) &  \rImplies_{\tau_{\CC}(g) \circ \ve_h}  & \tau_{\CC}(g) \circ \tau_{\CC}(h) \circ T &
\rImplies_{\psi_{\CC,g,h} \circ T}
&  \tau_{\CC}(g h) \circ T\\
\end{diagram}
\item A formula $ (\psi_{\CC, 1} \circ T) \, \ve_1 = T \circ \psi_{\BB, 1} $ holds.
\end{enumerate}
\end {defin}

\begin{nt} \label{G-equiv}
{\em
$ G $-linear functors from $ \BB $ to $ \CC $  are all objects of a natural $ k $-linear category, which we denote as
$ {\mathcal Hom}_G (\BB, \CC) $. If $ (L, \ve) $ and $ (M, \epsilon) $ are objects of the  category $ {\mathcal Hom}_G (\BB, \CC)) $, then, by definition, the $ k $-vector space
$ \Hom_{{\mathcal Hom}_G (\BB, \CC)} ((L, \ve), (M, \epsilon)) $ consists of morphisms of functors $ \varphi: L \Rightarrow M $ such that the following diagram
is commutative  for any element  $ g \in G $:
\begin{diagram}
L \circ \tau_{\BB}(g) &\rImplies^{\ve_{g}} &  \tau_{\CC}(g) \circ L \\
\dImplies^{\varphi \circ \tau_{\BB}(g)} & &\dImplies_{\tau_{\CC}(g) \circ \varphi}\\
M \circ \tau_{\BB}(g) &\rImplies_{\epsilon_{g}} & \tau_{\CC}(g) \circ M
\end{diagram}

If $ \BB $, $ \CC $, $ \cal D $ are $ k $-linear categories with a $ G $-action, and
$ (L, \ve) \in \Ob ({\mathcal Hom}_G (\BB, \CC))) $,
 $ (N, \zeta) \in \Ob ({\mathcal Hom}_G (\CC, \cal D))) $, then
 $$
 (N, \zeta) \circ (L, \ve) = (N \circ L , \eta) \in \Ob ({\mathcal Hom}_G (\BB, \cal D))) \mbox{,}
 $$
 where for any $ g \in G $ the morphism of
functors $ \eta_g \,: \, N \circ L \circ \tau_{\BB} (g) \Rightarrow \tau_{\cal D} (g) \circ N \circ L $
 is defined as the composition of morphisms of functors
$$
 N \circ L \circ \tau_{\BB}(g) \stackrel{N \circ \ve_g}{\Longrightarrow} N \circ \tau_{\CC}(g) \circ L
 \stackrel{\zeta_g  \circ L}{\Longrightarrow} \tau_{\cal D}(g) \circ
 N \circ L \mbox{.}
 $$

Now we can say that  $ k $-linear categories $ \BB $ and $ \CC $ with a $ G $-action are {\em $ G $-equivalent} if there exists a $ G $-linear functor
$ {\mathfrak L} $ from $ \BB $ to $ \CC $ and there exists a $ G $-linear functor $ {\mathfrak N} $ from $ \CC $ to $ \BB $ such that the $ G $-linear functors $ \mathfrak{N} \circ \mathfrak{L} $ and $ (\Id_{\BB}, \mbox{id}) $ are isomorphic as objects of the category
$ {\mathcal Hom}_G (\BB, \BB) $,
and  the $ G $-linear functors $ \mathfrak{L} \circ \mathfrak{N} $ and $ (\Id_{\CC}, \mbox{id}) $ are isomorphic as objects of the category
$ {\mathcal Hom}_G ( \CC, \CC) $.

It is not difficult to understand that to determine that the categories $ \BB $ and $ \CC $ are $ G $-equivalent, it is enough   to construct only a
$ G $-linear functor
$ (L, \ve): \BB \to \CC $ and
to check that the functor $ L $ is an equivalence of categories.
}
\end{nt}

We now define a category of $ G $-equivariant objects.
\begin{defin}
Let $ \BB $ be a $ k $-linear category with a $ G $-action. A category of $ G $-equivariant objects $ \BB^G $ is defined as follows.
\begin{enumerate}
\item An object of the category $ \BB^{G} $ consists of an object $ E $  of the category $ \BB $ and of a system of isomorphisms:
$$
\theta_g \; : \; E \lto \tau (g) (E) \quad \mbox{for any} \quad g \in G \mbox{,}
$$
such that the following diagram is commutative  for any elements $ g, h \in G $:
\begin{equation}  \label{equidiag}
\begin{diagram}
E &\rTo^{\theta_g} &  \tau(g)(E)\\
\dTo^{\theta_{gh}} & &\dTo_{\tau(g)(\theta_h)}\\
\tau(gh)(E) &\lTo_{\psi_{g,h,E}} & \tau(g) (\tau(h)(E))
\end{diagram}
\end{equation}
\item A morphism of equivariant objects  $ \,: \, (E, \theta) \lto (D, \vartheta) $ is a morphism $ f \in \Hom_{\BB} (E, D) $
such that  $ \tau (g) (f) \, \theta_g = \vartheta_g f $.
\end {enumerate}
\end {defin}

\begin{nt}  {\em
 If $ (E, \theta) \in \Ob (\BB^G) $, then any object $ D $ isomorphic to $E$  has an evident  $ G $-equivariant structure $ \vartheta $.}
\end{nt}

\begin{nt}  \label{zam}  {\em
Let $ (E, \theta) $ and $ (D, \vartheta) $ be objects of the category $ \BB^{G} $. Then
the group $ G $ acts
on the $ k $-vector space
$ \Hom_{\BB} (E, D) $  in the following way: $ f \mapsto \vartheta_g^{-1} \tau (g) (f) \, \theta_g $. (From
diagram~\eqref{equidiag} it follows  that this action is  a group action.) Now it is  easy to obtain that
$
 \Hom \nolimits_{\BB} (E, D)^G = \Hom \nolimits_{\BB^G} ((E, \theta), (D, \vartheta)) 
$.}
\end{nt}

\subsection{A categorical analogue of the unramified principal series representations} \label{catan}
Let now (and until the end of the article) $ K $ be a two-dimensional local field with the last finite residue field $ \df_q $.
Let $ \oo_{K} $ be the  rank $ 1 $ discrete valuation ring of the  two-dimensional local field $ K $, and $ t_K $ be a local parameter relative to this discrete valuation. By analogy with
section~\ref{one-dim} we define for every $ m \ge 1 $ a  normal (congruence) subgroup of the group $ GL_l (\oo_K) $ ($ l \ge 1 $):
$$
{\mathbb C}_m = 1 + t_K^{m} M_l (\oo_K) \mbox {.}
$$
We  denote also $ \CCC_0 = GL_l (\oo_K) $.

\begin{defin}
We say that a category $ \BB $  is a generalized $ 2 $-vector space over a field $ k $, if $ \BB $ is a $ k $-linear abelian category. We say that the category $ \BB $ has finite dimension $ r $, if the category  $ \BB $ is equivalent to the category $ (\Vect_k^{\rm fin})^r $.
\end{defin}

Let us  introduce a categorical analogue of a smooth representation.
\begin{defin} \label{glad}
An  action of the group $ GL_l (K) $ on a generalized $ 2 $-vector space $ \BB $ is called smooth if the following condition is satisfied.
  For any objects $ E $, $ D $ from $ \BB $ and for any morphism $ f \in \Hom\nolimits_{\BB} (E, D) $ there are an integer $ m \ge 0 $, objects
    $ (E, \theta) $ and $ (D, \vartheta) $ from the category $ \BB^{{\mathbb{C}}_m} $ such that  $ f \in \Hom_{\BB^{{\mathbb{C}}_m}}
((E, \theta),
    (D, \vartheta)) $.
\end{defin}

In particular, if we consider in this definition  the identity morphism, then  we see that on any object  $ \BB $ there is a
$ {{\mathbb{C}}_m} $-equivariant structure for some $ m \ge 0 $.

\medskip

For a two-dimensional local field $ K $ with the last residue field $ \df_q $ let $ K^{\rm nr} $ will be its maximal unramified extension as a two-dimensional local field. Namely, the field $ K^{\rm nr} $ is unramified relative to the discrete valuation of the field $ K $, and the residue field $ \overline{K^{\rm nr}} $ of the field
 field $ K^{\rm nr} $ is  unramified relative to the discrete valuation of the residue field $ \bar{K} $ of the  field $ K $. Then the Galois group
 $ \Gal (K^{\rm nr} / K) = \hat{\dz} $ is topologically generated by the Frobenius automorphism $ \rm Fr $. The two-dimensional unramified reciprocity map is defined in the following way (see~\cite[theorem~1]{P}):
 \begin{equation} \label{vzaim}
 K_2 (K) \lto \Gal (K^{\rm nr} / K) \quad : \quad (f, g) \longmapsto {\rm Fr}^{\nu_K (f, g)} \mbox{, }
 \end{equation}
where the map $ \nu_K (\cdot, \cdot) $ was defined by formula~\eqref{cel}.

 We {\em define} a Weil group\footnote{It follows from the two-dimensional local
class field  theory  that the group $ W_K $  is a dense subgroup of the
profinite group $ \Gal (K^{\rm ab} / K) $. The image of the group $ W_K $ under the
map  $ \Gal (K^{\rm ab} / K) \to \Gal (K^{\rm nr} / K) =
\Gal (\bar{\df}_q / \df_q) = \hat{\dz} $ is the group $ \dz $.}
 $ W_K $  of a two-dimensional local field $ K $  as the group which is the image of the whole reciprocity map
 $ K_2 (K) \to \Gal (K^{\rm ab} / K) $. Then any $ n $-dimensional unramified semisimple complex representation $ \rho $ of the Weil group
$ W_K $ pass through the group $ \dz $. Therefore the representation $\rho$ is determined up to an isomorphism by the set of $ n $ nonzero complex numbers $ \alpha_1, \ldots, \alpha_n $. Now an element
$ (a_1, \ldots, a_n) \in \dc^n / \left(\frac{2 \pi i}{\ln q} \right) \dz^n $ is defined in the following way:
$ q^{-a_i } = \alpha_i $, $ 1 \le i \le n $.

Now we consider  an arbitrary {\em ordered} set $ (a_1, \ldots, a_n) $ as an element of $ \dc^n / \left(\frac{2 \pi i}{\ln q} \right) \dz^n $. Let $ A $ be an arbitrary $ \dr_+^* $-torsor, $ b $ be any element from $ \dc $. We consider a one-dimensional $ \dc $-vector space
$$ A^b = (A \otimes_{\dr_+^*} \dc^*) \cup 0 \mbox{,} $$
where $ \dc^* $-torsor $ A \otimes_{\dr_+^*} \dc^* $ is defined by means of a homomorphism
$$ \dr_+^* \lto \dc^* \; : \; x \longmapsto x^{b} \mbox{.} $$

We recall that in  \S~\ref{C2} we have constructed the central extension
\begin{equation} \label{GL}
1 \lto \dr_+^* \lto \widehat{GL_2 (K)}_{\dr_+^*} \stackrel{\pi}{\lto} GL_2 (K) \lto 1 \mbox{.}
\end{equation}
We note that it follows from the construction of this central extension that there exists its canonical splitting over
the subgroup $ GL_2 (\oo_K) $.
For any element $ g \in GL_2 (K) $ we define a $ \dr_+^* $-torsor $ A_g = \pi^{-1} (g) $.
Then, by means of central extension~\eqref{GL}, for any element
$ b \in \dc $ and for any elements $ g, h \in GL_2 (K) $  we obtain a canonical isomorphism
of one-dimensional $ \dc $-vector spaces
\begin{equation} \label{iso}
(A_{g})^b \otimes_{\dc} (A_{h})^b \lto (A_{gh})^b \mbox{,}
\end{equation}
which satisfies  associativity for any elements $ g, h, k $ of the group $ GL_2 (K) $.

For any integer $ n \ge 1 $, in the group $ GL_{2n} (K) $ we consider the standard pa\-ra\-bo\-lic subgroup $ P $  defined by block upper triangular matrices:
$$
P =\left\{ g=\left(
\begin{matrix}
g_1 & *    & *      & * \\
    & g_2  & *      & * \\
    &      & \cdot  & \cdot \\
    &      &        &  g_n
\end{matrix}
\right) \; : \; g_i \in GL_2(K)
\right\}  \mbox{.}
$$
 By an element $ (a_1, \ldots, a_n) \in \dc^n / \left(\frac{2 \pi i}{\ln q} \right) \dz^n $ we {\em define} an action of  the group $ P $
 on the $ \dc $-linear category
$ \Vect_{\dc}^{\rm fin} $ as follows. To do this, for any element $ g \in P $ we define a $ \dc $-linear functor
$ \tau_{a_1, \ldots, a_n} (g) \,: \, \Vect_{\dc}^{\rm fin} \to \Vect_{\dc}^{\rm fin} $ in the following way:
\begin{equation} \label{act}
\tau_{a_1 , \ldots, a_n}(g) (Y) = (A_{g_1})^{a_1} \otimes_{\dc} \ldots \otimes_{\dc} (A_{g_n})^{a_n} \otimes_{\dc} Y  \; \mbox{,} \quad \mbox{ где} \quad
 Y \in \Vect\nolimits_{\dc}^{\rm fin} \mbox{.}
 \end{equation}
    From maps~\eqref{iso}, isomorphisms of functors $ \tau_{a_1, \ldots, a_n} (g) \circ \tau_{a_1, \ldots, a_n} (h) \Rightarrow \tau_{a_1, \ldots, a_n} (gh) $ (where
$ g, h \in P $)
and $ \tau_{a_1, \ldots, a_n} (1) \Rightarrow \Id_{\Vect_{\dc}^{\rm fin}} $ are  obtained in an obvious way. And  for these isomorphisms  all the conditions from  definition~\ref{catact} are satisfied.
\begin{nt} {\em
 The constructed action of the group $ P $ on the category $ \Vect_{\dc}^{\rm fin} $ really depends only on an element of the
 group $ \dc^n / \left(\frac{2 \pi i}{\ln q } \right) \dz^n $
(but not on an element of the group $ \dc^n $), because   central
extension~\eqref{GL} comes from an element of $ H^2 (GL_2 (K), \dz) $ after the application of the homomorphism
$ \dz \to \dc^* \,: \, a \mapsto q^a $ (see \S~\ref{dv}).}
\end{nt}

Now we {\em construct} a categorical analogue $ \VV_{a_1, \ldots, a_n} $ of the principal series representations  of the group $ GL_{2n} (K) $.
For every $ m \ge 0 $ we consider
a $ \dc $-linear category
$$ \VV_m = \prod_{g \in GL_{2n} (K) / \CCC_m} \Vect\nolimits_{\dc}^{\rm fin}  $$
whose objects are all the maps $ f $ from a set of left cosets  $ GL_{2n} (K) / \CCC_m $ to objects of the category
$ \Vect_{\dc}^{\rm fin} $, and the morphisms in the category $ \VV_m $ are obvious.
  There is an action $ \xi_{a_1, \ldots, a_n} $ of the group $ P $
  on the category $ \VV_m $:
\begin{equation} \label{catde}
\xi_{a_1, \ldots, a_n} (p) (f) (x) = \tau_{a_1, \ldots, a_n} (p) (f (p^{-1} x)) \mbox{,}
\end{equation}
 where $ p \in P $, $ x \in GL_{2n} (K) / \CCC_m $, and $ f \in \Ob (\VV_m) $. (This action is clearly  defined also on the
morphisms in $ \VV_m $.)
We 
note that for any integers $ m_2 \ge m_1 \ge 0 $ there is a natural $ P $-linear functor $ \VV_ {m_1} \to \VV_ {m_2} $ which arises from the map  of sets
 $ GL_{2n} (K) / \CCC_{m_2} \rightarrow GL_{2n} (K) / \CCC_{m_1} $. This functor induces the natural functor $ Q_{m_1, m_2} \,: \,
 (\VV_{m_1})^{P} \to (\VV_{m_2})^{P} $
 between the categories of $ P $-equivariant objects. (We note that we have also here  a strict equality of functors
 $ Q_{m_2, m_3} \circ Q_{m_1, m_2} = Q_{m_1, m_3} $ for any $ 0 \le m_1 \le m_2 \le m_3 $). Now we  define a $ \dc $-linear
 category:
\begin{equation}  \label{2limform}
 \VV_{a_1, \ldots , a_n} = \mathop{\underrightarrow\LLim}_{m \ge 0} \, (\VV_m)^P   \mbox{,}
 \end{equation}
where the objects of the category $ \VV_{a_1, \ldots, a_n} $ are  all the pairs $ (m, f) $ (an integer $ m \ge 0 $ and
$ f \in \Ob ((\VV_{m} )^{P}) $), and the morphisms are defined as
$$
\Hom\nolimits_{\VV_{a_1, \ldots , a_n} } ((m_1,f_1), (m_2,f_2))  = \mathop{\underrightarrow\Lim}_{m \ge \max(m_1,  m_2)} \Hom\nolimits_{(\VV_{m})^{P}} (Q_{m_1, m}(f_1), Q_{m_2, m}(f_2)) \mbox{.}
$$
(The definition of the categorical direct limit $ \mathop{\underrightarrow\LLim} $, which is used in  formula~\eqref{2limform},
is taken from~\cite[Appendix]{K2}.)

\begin{Th} \label{th2} We fix an element $ (a_1, \ldots, a_n) \in \dc^n / \left(\frac{2 \pi i}{\ln q} \right) \dz^n $. The following properties are satisfied.
\begin{enumerate}
\item \label{st1} The  category $ \VV_{a_1, \ldots, a_n} $ is a generalized $ 2 $-vector space over the field $ \dc $. There is a natural smooth action of the group $ GL_{2n} (K) $ on the category $ \VV_{a_1, \ldots, a_n} $.
\item \label{st2} The generalized $ 2 $-vector space $ (\VV_{a_1, \ldots, a_n})^{GL_{2n} (\oo_K)} $ contains as a full subcategory a category of dimension $ 1 $ (i.e a category which is equivalent to the category $ \Vect_{\dc}^{\rm fin} $).
\item \label{st3}
Let $ n = 1 $. We consider an action of the group $ GL_2 (K) $ on the category $ \Vect_{\dc}^{\rm fin} $ such that this action  depends on a parameter
$ a_1 \in \dc / \left(\frac{2 \pi i}{\ln q} \right) \dz $ and arises from an  unramified character of the group $ \Gal (K^{\rm sep} / K) $ (as it is described by formula~\eqref{vzaim} and in \S~\ref{abcase}). There is a $ GL_2 (K) $-linear functor $ \mathfrak{T} = (T, \ve) $ from the category  $ \Vect_{\dc}^{\rm fin} $ to the category $ \VV_{a_1} $ such that the functor $ T $ is fully faithful.
\end{enumerate}
\end{Th}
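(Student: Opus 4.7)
The plan is to treat the three statements in order. For (\ref{st1}), each $\VV_m$, as an infinite product of copies of $\Vect_\dc^{\rm fin}$, is a $\dc$-linear abelian category with kernels and cokernels computed coordinate-wise, and the functors $\xi_{a_1,\ldots,a_n}(p)$ are exact. The category $(\VV_m)^P$ of $P$-equivariant objects is then $\dc$-linear abelian, since an equivariant structure descends canonically to the kernel or cokernel of an equivariant morphism. The transition functors $Q_{m_1,m_2}$ are exact and fully faithful (an equivariant structure pulls back along $GL_{2n}(K)/\CCC_{m_2} \twoheadrightarrow GL_{2n}(K)/\CCC_{m_1}$), so the $2$-colimit in~\eqref{2limform} produces a $\dc$-linear abelian category. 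The $GL_{2n}(K)$-action is defined on $\VV_m$ by the right translation $(g \cdot f)(x) = f(xg)$, which strictly commutes with the left $P$-translation in~\eqref{catde} and thus descends to $(\VV_m)^P$; the associativity data of Definition~\ref{catact} are strict at this level. Smoothness is automatic: if $c \in \CCC_m$, then $xc \equiv x \pmod{\CCC_m}$, so $\CCC_m$ acts as the identity on $\VV_m$, every object at level $m$ carries a canonical $\CCC_m$-equivariant structure, and every morphism is automatically $\CCC_m$-equivariant.

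For (\ref{st2}), the key input is the Iwasawa decomposition $GL_{2n}(K) = P \cdot GL_{2n}(\oo_K)$, which holds because $\oo_K$ is a discrete valuation ring: elementary column operations over the PID $\oo_K$ reduce any matrix to block upper triangular form. Hence $P \backslash GL_{2n}(K) / GL_{2n}(\oo_K)$ is a single orbit, with stabilizer $P \cap GL_{2n}(\oo_K)$. I work at level $m = 0$ where $\CCC_0 = GL_{2n}(\oo_K)$ acts trivially on $\VV_0$. Given $Y \in \Vect_\dc^{\rm fin}$, I define $f_Y(\overline{g}) = \tau_{a_1,\ldots,a_n}(p)(Y)$ for any factorization $g^{-1} = p \cdot k$ with $p \in P$, $k \in GL_{2n}(\oo_K)$. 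The ambiguity in $p$ lies in $P \cap GL_{2n}(\oo_K)$, but the canonical splitting of $\widehat{GL_2(K)}_{\dr_+^*}$ over $GL_2(\oo_K)$ established in Section~\ref{constr} canonically trivializes each line $(A_{g_i})^{a_i}$ for $g_i \in GL_2(\oo_K)$; therefore $\tau_{a_1,\ldots,a_n}$ restricted to $P \cap GL_{2n}(\oo_K)$ is canonically isomorphic to the identity functor. Consequently $f_Y$ is well-defined up to canonical isomorphism and carries a canonical $P$-equivariant structure, and the resulting full subcategory $\{(0, f_Y) : Y \in \Vect_\dc^{\rm fin}\}$ of $(\VV_0)^P$ is equivalent to $\Vect_\dc^{\rm fin}$ and trivially $GL_{2n}(\oo_K)$-equivariant.

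For (\ref{st3}), with $n = 1$ the parabolic $P$ is all of $GL_2(K)$, and I take $T: \Vect_\dc^{\rm fin} \to \VV_{a_1}$ to be $Y \mapsto (0, f_Y)$ with $f_Y$ as above (the Iwasawa argument of (\ref{st2}) applies verbatim to the single-block case). The equivariance data $\ve_g: T \circ \tau_{a_1}(g) \Rightarrow \tau_{\VV_{a_1}}(g) \circ T$ comes from comparing the right translate $g \cdot f_Y$ with $f_{\tau_{a_1}(g)(Y)}$ via the associativity of~\eqref{iso}. Fully faithfulness follows from the observation that a $P$-equivariant morphism $f_Y \to f_{Y'}$ in $\VV_{a_1}$ is determined by its value at the identity coset, yielding $\Hom_{\VV_{a_1}}(T(Y), T(Y')) = \Hom_\dc(Y, Y')$. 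The main obstacle throughout will be the careful bookkeeping of the equivariance isomorphisms from Definition~\ref{catact} and Remark~\ref{G-equiv}: verifying the strict compatibility of the left $P$-action and right $GL_{2n}(K)$-action at the level $\VV_m$, and the coherence of the torsor trivializations over $GL_2(\oo_K)$ across all double coset representatives. These verifications are formal consequences of the associativity of~\eqref{iso} and of the canonical splitting of $\widehat{GL_2(K)}_{\dr_+^*}$ over $GL_2(\oo_K)$, but the pentagon and hexagon diagrams deserve attentive tracking.
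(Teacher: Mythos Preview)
Your approach is essentially the same as the paper's: abelianness of $\VV_{a_1,\ldots,a_n}$ via products, equivariant objects, and the $2$-colimit; the $GL_{2n}(K)$-action by right translation with smoothness from $\CCC_m$ acting trivially on $(\VV_m)^P$; the Iwasawa decomposition together with the canonical splitting of $\widehat{GL_2(K)}_{\dr_+^*}$ over $GL_2(\oo_K)$ for statement~\ref{st2}; and the same functor $T$ for statement~\ref{st3}.

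One technical point to correct: right translation by a general $g \in GL_{2n}(K)$ does \emph{not} define an endofunctor of $\VV_m$, because $g$ need not normalize $\CCC_m$ (if $f$ is constant on right $\CCC_m$-cosets, then $x \mapsto f(xg)$ is constant on right $g\CCC_m g^{-1}$-cosets). The paper handles this by sending $(m,f)$ to $(l, g \cdot f)$ where $l$ is the least integer with $\CCC_l \subset g\CCC_m g^{-1}$, whose existence follows from the Cartan decomposition of $GL_{2n}(K)$ over the discrete valuation ring $\oo_K$; this is how $\sigma(g)$ becomes a well-defined functor on the $2$-colimit. Also, in your construction of $f_Y$ in part~\ref{st2} you should factor $g = p k$ rather than $g^{-1} = p k$, so that the $P$-equivariance isomorphism $f_Y \Rightarrow \xi(p')(f_Y)$ comes directly from $\tau(p') \circ \tau(p'^{-1}p) \cong \tau(p)$.
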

\begin{nt} {\em Functors from the category $ \Vect_{\dc}^{\rm fin} $, which arise in items~\ref{st2} and~\ref{st3} of theorem~\ref{th2},
are automatically exact functors, because the category  $ \Vect_{\dc}^{\rm fin} $ is semisimple.}
\end{nt}
\begin{proof} (of theorem~\ref{th2}).
 The category $ \VV_{a_1, \ldots, a_n} $ is abelian by the following reasons. Since a category
 $ \prod_{g \in X} \Vect_{\dc}^{\rm fin} $ is abelian, where $ X $ is any set, we have that the category of $ P $-equivariant objects
  $ (\VV_m)^P $  is abelian  for any $ m \ge 0 $. In addition, $ \mathop{\underrightarrow \LLim} $ of a set of abelian categories with exact transition  functors  is an abelian category.

 An action  $ \sigma $ of the group $ GL_{2n} (K) $ on the category $ \VV_{a_1, \ldots, a_n} $ is  defined as follows. (This action is a categorical analogue of the corresponding action for one-dimensional local fields, see remark~\ref{dejst}.)
 Let
$$ (M, \tilde{f}) \in \Ob (\VV_{a_1, \ldots, a_n}) \quad \mbox{,} \quad g \in GL_{2n} (K) \mbox{.} $$
Taking into account the map $ GL_{2n} (K) \to GL_{2n} (K) / \CCC_m $, we consider $ \tilde{f} = (f, \theta) $, where $ f $ is a map  from the set $ GL_{2n} (K) $ to objects of the
category $ \Vect_{\dc}^{\rm fin} $, and $ \theta = \{\theta_{p, x} \} $ ($ p \in P $, $ x \in GL_ {2n} ( K) $) defines the structure of a $ P $-equivariant object on $ f $ under the action of the group $ P $ by  formula~\eqref{catde}.
Then, by definition, we put $ \sigma (g) (m, (f, \theta)) = (l, (g (f), g (\theta))) $, where $ g (\{\theta_{ p, x} \}) =
\{\theta_{p, xg} \} $, $ g (f) (x) = f (xg) $ for any $ x \in GL_{2n} (K) $, and an integer $ l \ge 0 $  is the smallest non-negative integer with the property  $ g \CCC_m g^{-1} \supset \CCC_l $. (The existence of such an integer $ l $ follows, for example, from the Cartan decomposition of
the group  $ GL_{2n} (K) $
over the discrete valuation field $ K $.)  Here the map $ g (f) $ can be regarded as a well-defined map from
the set $ GL_{2n} (K) / \CCC_l $ to objects of the category $ \Vect_{\dc}^{\rm fin} $, and $ g (\theta) $ defines the structure of a
$ P $-equivariant object on $ g (f) $. Evidently,  an action of the functor $ \sigma (g) $ on the morphisms of the category
$ \VV_{a_1, \ldots, a_n} $ is also defined. It is clear that we obtained the well-defined action $ \sigma $ of the group $ GL_{2n} (K) $ on the category $ \VV_{a_1, \ldots, a_n} $.

It follows at once from definition~\ref{glad} and from the construction of the category $ \VV_{a_1, \ldots, a_n} $ as
a $ 2 $-inductive limit
  that the action
$ \sigma $ is smooth. (For any $ m \ge 0 $ the group $ \CCC_m $ acts identically on the category $ (\VV_m)^ P $. Therefore  on any object of this category there is always the $ \CCC_m $-equivariant structure given by the identity morphisms.) Thus we have proved assertion~\ref{st1} of the theorem.

We construct a functor $ L $ from the category $ \Vect_{\dc}^{\rm fin} $ to the category
\linebreak
 $ (\VV_{a_1, \ldots, a_n})^{GL_{2n} (\oo_K)} $.
We note that it follows from the Iwasawa decomposition of the group $ GL_{2n} (K) $ over the discrete valuation field $ K $
that
 $ GL_{2n} (K) = P \cdot GL_{2n} (\oo_K) $.
For each coset
$ x \in GL_{2n} (K) / GL_{2n} (\oo_K) $ we fix
its representative $ g_x \in GL_{2n} (K) $ and fix some decomposition  $ g_x = p_x h_x $, where $ p_x \in P $,
$ H_x \in GL_{2n} (\oo_K) $. Let $ V \in \Ob (\Vect_{\dc}^{\rm fin}) $. Then we define $ f_V \in \Ob (\VV_0) $ as
$ f_V (x) = \tau_{a_1, \ldots, a_n} (p_x) (V) $
(see formula~\eqref{act}).
There is a canonical splitting of central extension~\eqref{GL} over the  subgroup $ GL_2 (\oo_K) $. This splitting gives a canonical trivialization of the action $ \tau_{a_1, \ldots a_n} $ on the category $ \Vect_{\dc}^{\rm fin} $ when this action is restricted to the group
$ P \cap GL_{2n} ( \oo_K) $. This trivialization gives a unique lift of the object $ f_V $  to the object $ \widetilde{f_V} $ of
 the category  $ (\VV_0)^ P $. Since the group $ GL_{2n} (\oo_K) $ acts identically on the category $ (\VV_0)^P $, there is an obvious lift
of the object $ (0, \widetilde{f_V}) $ from the category $ \VV_{a_1, \ldots, a_n} $ to the object $ L (V) $ of the category
$ (\VV_{a_1, \ldots, a_n})^{GL_{2n} (\oo_K)} $
by means of  the identity morphisms.
  Evidently, $ L $ is defined on morphisms of the category $ \Vect_{\dc}^{\rm fin} $. It is not difficult to see that $ L $  a fully faithful functor from the category $ \Vect_{\dc}^{\rm fin} $ to the abelian category $ (\VV_{a_1, \ldots, a_n})^{GL_{ 2n} (\oo_K)} $.
Assertion~\ref{st2} of the theorem is proved.

A functor $ T $ from assertion~\ref{st3} of the theorem is the composition of the newly constructed functor $ L $ (see the proof
of assertion~\ref{st2} of this theorem) and of the forgetful functor (which   forgets  the $ GL_2 (\oo_K) $-equivariant structure).
 Namely, from  $ V \in \Ob (\Vect_{\dc}^{\rm fin}) $ we construct $ T (V) = (0, \widetilde{f_V}) \in \Ob (\VV_{ a_1}) $. We note that the functor
 $ T $ is fully faithful by the following reasons.   Firstly,  the functor $ L $ is fully faithful. Secondly, since the
$ GL_2 (\oo_K) $-equivariant structure was given by the identity morphisms,   the functor which forgets   such a structure  is also
fully faithful.

 It follows from the description of the reciprocity map for the two-dimensional local field $ K $ (see formula~\eqref{vzaim}),
from the construction from~\S~\ref{abcase} and from  proposition~\ref{pr}  that the action of the  group $ GL_2 (K) $
on the category $ \Vect_{\dc}^{\rm fin} $ such that this action depends on a parameter
$ a_1 \in \dc / \left(\frac{2 \pi i} {\ln q} \right) \dz $ and arises from an unramified character  of the group
$ \Gal (K^{\rm sep} / K) $, is described by the functors $ \tau_{a_1} (g) $ by  formula~\eqref{act}, where $ g \in P = GL_2 (K) $.
 Now
 a $ GL_2 (K) $-linear structure  $ \ve $ on the functor $ T $ is constructed by means of the canonical splitting of
 central extension~\eqref{GL} over the subgroup of $ GL_2 (\oo_K) $.
The theorem is proved.
\end{proof}

\begin{nt} {\em
In items~\ref{st2} and~\ref{st3} of theorem~\ref{th2} we do not obtain a statement on an equivalence of categories by the following reason. For example,
in item~\ref{st2} we use that the group $ GL_{2n} (\oo_K) $ acts identically on the category $ (\VV_0)^P $, and we take the identity morphisms in
the $ GL_{2n} (\oo_K) $-equivariant structure. In this case, besides  the $ GL_{2n} (\oo_K) $-equivariant structure defined by the identity morphisms, there are many other $ GL_{2n} (\oo_K) $-equivariant structures on the objects of the category $ (\VV_0)^P $  such that these structures are non-isomorphic to the identity structure.
In item~\ref{st3} we have similar problems, because  $ P \cap \CCC_m \ne \{1 \} $ for every $ m \ge 0 $.
}
\end{nt}

\begin{nt} {\em
The category $ \VV_{a_1, \ldots, a_n} $ with the $ GL_{2n} (K) $-action is a categorical analogue of the induced representation
from \S~\ref{one-dim}.
For finite groups $ H \subset G $ and a category $ \BB $ with an $H$-action, a category of the induced $ G $-representation
$ \mathop{\rm ind} \hspace{-0,1cm} \mid^G_H  \hspace{-0,1cm} (\BB) $,
 as a category of $ H $-equivariant objects in a certain category,
was constructed and studied in~\cite{GK}. (See also\cite{G}, where the study of $ G $-equivariant objects was continued.)
}
\end{nt}

\begin{nt} {\em
Analogous categorical induced representations, depending on \linebreak $ (a_1, \ldots, a_n) \in \dc^n $, can be defined for the group
$ GL_{2n} (\da_{\Delta}) $, where $ \da_{\Delta} $ is some subring in the adelic ring  $ \da_X $ or $ \da_{X}^ {\rm ar} $ of two-dimensional normal integral scheme $ X $ of finite type over $ \dz $ (see \S~\ref {C2}). In this case, one has to use the central extension
 $ \widehat{GL_2 (\da_{\Delta})}_{\dr_+^*} $ of the group $ GL_{2} (\da_{\Delta}) $ by the group $ \dr_+^* $ (this central extension
  was constructed in \S~\ref{C2}).
Instead of the groups $ \CCC_m $
one has to take the following subgroups $ \CCC_{\{m_C \}} $ of the group $ \prod_{x \in C} GL_{2n} (K_{x, C}) $:
$$ \CCC_{\{m_C \}}
= GL_{2n} (\da_{\Delta}) \cap \left(1 + \prod_{C \subset X} t_C^{m_C} \prod_{x \in C} M_{2n} (\oo_{K_{x, C}}) \right) \mbox{,} $$
where $ C $ runs over a set of all  one-dimensional integral subschemes of $ X $, $ t_C $ is a local parameter in the field $ K_C $,
and the set $ \{m_C \} $ consists of the integers $ m_C \ge 0 $ such  that $ m_C = 0 $ for almost all $ C $ (here, not all the integers $ m_C $ are zero.) If $ m_C = 0 $ for any integral one-dimensional subscheme $ C $, then
$$ \CCC_{\{0 \}}
= GL_{2n} (\da_{\Delta}) \cap \prod_{x \in C} GL_{2n} (\oo_{K_{x, C}}) \mbox{.}
$$
 If $ \da_{\Delta} $ coincides with a two-dimensional local field $ K $, then the resulting categorical representation of the group
$ GL_{2n} (K) $ is exactly the categorical representation $ \VV_{a_1, \ldots, a_n} $ which was constructed above.}
\end{nt}

\begin{nt} {\em
By virtue of remark~\ref{zam}, for any subgroup $ H \subset GL_{2n} (K) $ there is the action of the group
$ H $ on a $ \dc $-vector space
$ \Hom_{\VV_{a_1, \ldots, a_n}} (E, D) $ for any objects $ (E, \theta) $, $ (D, \vartheta) $ of the category of
$ (\VV_{a_1 , \ldots, a_n})^H $. It would be interesting to compare these actions with  the actions of  groups from~\cite{KazhG}, where the representation theory of reductive groups over two-dimensional local fields was constructed in the category
$ {\rm Pro} \Vect_{\dc} $, where $ \Vect_{\dc} $ is the category of all vector spaces over the field $ \dc $.}
\end{nt}

\subsection{Some hypothesis} \label{hyp}
By analogy with section~\ref{one-dim} we will formulate some hypothesis
about  smooth spherical  actions of the group $ GL_{2n}(K) $ on
generalized $ 2 $-vector spaces over the field $ \dc $, where $ K $
is a two-dimensional local field.

Let $ \BB $ be an abelian $ k $-linear category with a $ G $-action,
where $ G $ is a group and $ k $ is a field.

We will {\em say} that a pair $ (\ad, \mathfrak{T}) $  is a
$G$-subrepresentation of the category $ \BB $ if $ \ad $ is an
abelian $ k $-linear category with a $ G $-action , and $
\mathfrak{T} = (T, \ve) $ is a $ G $-linear functor from $ \ad $ to
$ \BB $ such that  the functor $ T $ is fully faithful and exact.

We will {\em  say} that the action of the group $ G $ on the
category $ \BB $ is irreducible if for any $ G $-subrepresentation $
(\ad, \mathfrak{T}) $ of the category $ \BB $ we have that either
the category $ \ad $ is equivalent to a category consisting of only
zero objects or the functor $ T $ is essentially surjective (in the
latter case the functor $ T $ is an equivalence of the categories $
\ad $ and $ \BB $, i.e the categories $ \ad $ and $ \BB $ are
$G$-equivalent, see remark~\ref{G-equiv}).

We will {\em say } that a pair $ (\CC, {\mathfrak P}) $ is a $
G$-quotient representation of the category $ \BB $ if $ \CC $ is an
abelian $ k $-linear category with a $ G $-action, $ \mathfrak{P}
=(P, \ve) $ is a $ G $-linear functor from $ \BB $ to $ \CC $. In
addition,  we demand here that $ P = Q \circ R $, where $ R $ is the
functor from $ \BB $ to the category $ \BB / \ad $ for some Serre
subcategory $ \ad $ of  $ \BB $, and $ Q $ is an equivalence of
categories  $ \BB / \ad $ and $ \CC $. (We note that the functor $
P$ is an exact functor.)

The definition of a spherical (or unramified) action has to be
stronger than the demanding of the
 irreducibility of an action of
the group $ GL_{2n} (K) $ (where $ n \ge 1 $ and $ K $ is a
two-dimensional local field) on the category $ \BB $ with the
condition that the abelian category $ \BB^{GL_{2n} (\oo_K)} $ is not
equivalent to a category consisting of only zero objects.  It can be
explained as follows. The subring $ \oo_K \hookrightarrow K $ does
not depend on the discrete valuation on the residue field, while
unramified extensions of the field  $ K $ which were considered are
unramified also with respect to the discrete valuation on the
residue field $ \bar{K} $. Therefore, one has  also to take into
account the discrete valuation on the residue field $ \bar{K} $.

The correct definition of a spherical (unramified) action of the
group $ GL_{2n} (K) $ would allow to state the following hypothesis.

{\em Any smooth spherical action of the group $ GL_{2n} (K) $ on a $
\dc $-linear abelian category $ \BB $  can be obtained as a
subquotient representation (i.e. as a quotient representation of a
subrepresentation) of the  category $ \VV_{a_1, \ldots, a_n } $ for
some  \linebreak  $ (a_1, \ldots, a_n) \in \dc^n / \left(\frac{2 \pi i}{\ln q}
\right) \dz^n $. (We omitted  here an indication to $ GL_{2n} (K)
$-linear functors which set the quotient representation and the
subrepresentation.) }

\vspace{0.5cm}

We next consider the case when $ n = 1 $ and a category  $ \BB $ is
equivalent to the category $ \Vect_{\dc}^{\rm fin} $, i.e  it  is a
one-dimensional $ 2 $-vector space.  In this case, it is easy to
understand that any action of the group $ GL_ {2} (K) $ on the
category $ \BB $ is irreducible. According to section~\ref{odn}, an
action of the group $ GL_2 (K) $ on the category $ \BB $ up to an
equivalence corresponds to a central extension (this is a one-to-one
correspondence):
\begin{equation} \label{ccc} 1 \lto \dc^* \lto
\hat{G} \lto GL_2 (K) \lto 1 \mbox{.}
\end{equation}
Clearly,  the condition of the sphericity of the action of the group
$ GL_2 (K) $ on the category $ \BB $ must match with a condition that
central extension~\eqref{ccc} is obtained from  central
extension~\eqref{cenext}:
$$ 1 \lto K_2 (K) \lto \widehat{GL_2
(K)} \lto GL_2 (K) \lto 1 $$ by means of  a map
\begin{equation} \label{oto}
K_2 (K) \lto \dc^* \: \ (f, g) \mapsto q^{-a \, \nu_K (f, g)}
\mbox{,}
\end{equation}
where $ q $ is the number of elements in the last residue field $ K
$ and $ a \in \dc / \left(\frac{2 \pi i}{\ln q} \right) \dz $.
When
this condition is satisfied, the action of the group $ GL_2 (K) $ on the category
$ \BB$ is smooth, because the central extension $ \hat {G} $  splits
over the subgroup  $ GL_2 (\oo_K) $. (Indeed, when  this condition is satisfied, it follows
from proposition~\ref{pr}  that the central extension
$\hat {G} $ is obtained from the central extension
$ \widehat{GL_2 (K)}_{\dr_+^*} $ by means of the  map $ \dr_+^* \lto \dc^*: x \mapsto x^ a $. By construction,
the central extension
$ \widehat{GL_2 (K)}_{\dr_+^*} $ splits over the subgroup
 $GL_2 (\oo_K) $.) In addition, according to  statement~\ref{st3} of theorem~\ref{th2}, the category $ \BB $ is a
 $ GL_2 (K)$-subrepresentation of the category $ \VV_a $ (here we have omitted an indication
to a $ GL_{2} ( K) $-linear functor which sets the subrepresentation).

Now we consider a two-dimensional local field $ K $ which is isomorphic to one of the following fields (see~\cite[\S2]{O1}, where  it is described,  how these fields arise from  algebraic surfaces and from arithmetic  surfaces):
\begin{equation} \label{2f}
\df_q ((u)) ((t)) \ \mbox{,} \qquad \qquad L ((t))) \ \mbox{,} \qquad \qquad L \{\{u \} \} \mbox{,}
\end{equation}
where $ L $ is a one-dimensional local field of characteristic zero, and the field $ L \{\{u \} \} $ is the completion of the field
$ \Frac (\oo_L [[u]]) $ by  a discrete valuation given by the  height $ 1 $ prime  ideal $ m_L \oo_L [[u]] $  of the ring $ \oo_L [[u]] $
(the ideal $ m_L $ is  the maximal  ideal of the ring $ \oo_L $).
We consider a subring  $ B $ of the field $ K $ which is given  in accordance with  isomorphism~\eqref{2f} by one of the following ways:
$$
\df_q [[u]] ((t)) \ \mbox{,} \qquad \qquad \oo_L ((t))) \ \mbox{,} \qquad \qquad L \cdot \oo_L [[u]] \mbox{.}
$$
We note that the subring $ B $ depends on the choice of isomorphism~\eqref{2f}. Therefore, we fix some such subring  $ B $   for a  field $ K $.

Let $ E $ be a simple object of the category  $ \BB $. We suppose that
$$ (E, \theta) \in \Ob (\BB^{GL_2 (\oo_K)}) \mbox{,} \qquad (E, \vartheta) \in
\Ob (\BB^{GL_2 (B)}) \mbox{,} \qquad (E, \upsilon) \in \Ob (\BB^{K^*}) $$
for some $ \theta $, $ \vartheta $ and $ \upsilon $, where $ K^* \hookrightarrow GL_2 (K) $
(an embedding  to the upper left corner). Then it is not difficult to see that $ \theta $, $ \vartheta $ and $ \upsilon $ define splittings of
 the group $ \hat{G} $ over the subgroups
$ GL_2 (\oo_K) $, $ GL_2 (B) $ and $ K^* $, respectively. This assumption will gives the condition of the sphericity for the action of the group $ GL_2 (K) $ on the category $ \BB $, where $ \BB $ is a one-dimensional $2$-vector space, since  the following proposition holds.
\begin{prop} \label{pred}
Let a two-dimensional local field $ K $ be isomorphic to one of the fields of the form~\eqref{2f}. A central extension  $ \hat{G} $ is obtained from the central extension  $ \widehat{GL_2 (K)} $ by means of the map~\eqref{oto} (for some element $ a \in \dc /
\left(\frac{2 \pi i}{\ln q} \right) \dz $) if and only if
 the central extension  $ \hat{G} $ splits over the subgroups $ GL_2 (\oo_K) $, $ GL_2 (B) $ and $ K^* $ of the group  $ GL_2 (K) $ (the subgroup $ K^* $ is embedded into the upper left corner).
\end{prop}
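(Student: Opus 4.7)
The plan is to use Proposition~\ref{prop-coh} to translate the problem into a question about homomorphisms $\varphi\colon K_2(K)\to\dc^*$. Since $\hat G$ splits over $K^*$ by hypothesis, its class in $H^2(GL_2(K),\dc^*)$ has trivial $H^2(K^*,\dc^*)$-component and so corresponds to a unique such $\varphi$, which by~\eqref{equ} is given on symbols by $\varphi(x,y)=\langle\diag(y,1),\,\diag(1,x)\rangle$, i.e.\ the commutator of liftings of these commuting diagonal matrices to $\hat G$. The proposition then reduces to the equivalence: $\varphi=q^{-a\,\nu_K(\cdot,\cdot)}$ for some $a$ if and only if $\varphi$ annihilates all symbols $\{u,v\}$ with $u,v\in\oo_K^*$ and all symbols $\{x,y\}$ with $x,y\in B^*$.

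For the \emph{only if} direction, I would invoke Proposition~\ref{pr}: if $\varphi=q^{-a\nu_K}$ then $\hat G$ is obtained from $\widehat{GL_2(K)}_{\dr_+^*}$ by the map $x\mapsto x^a\colon\dr_+^*\to\dc^*$, and the canonical splitting of $\widehat{GL_2(K)}_{\dr_+^*}$ over $GL_2(\oo_K)$ from \S\ref{constr} yields the required splitting over $GL_2(\oo_K)$. For $GL_2(B)$, one checks directly from \eqref{cel} that $\nu_K(x,y)=0$ for $x,y\in B^*$: writing $x=t^n\alpha$, $y=t^m\beta$ with $\alpha,\beta$ in the bi-valuation ring $B\cap\oo_K$ (as holds in each of the three cases~\eqref{2f} by inspection of the explicit form of $B$), the residues $\bar\alpha,\bar\beta$ lie in $\oo_{\bar K}^*$, so $\nu_{\bar K}(\overline{\alpha^m/\beta^n})=0$.

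For the \emph{if} direction, I would argue as follows. Using $K^*=t^{\dz}\cdot\oo_K^*$ and bimultiplicativity, every element of $K_2(K)$ is a sum of symbols of the three types $\{t,t\}$, $\{t,v\}$ with $v\in\oo_K^*$, and $\{\alpha,\beta\}$ with $\alpha,\beta\in\oo_K^*$. The $\oo_K^*$-vanishing kills symbols of the third type, while the $B^*$-vanishing gives $\varphi\{t,t\}=0$ (since $t\in B^*$) and $\varphi\{t,v\}=0$ for $v\in B^*\cap\oo_K^*$. Thus $\varphi$ is determined by the character $\chi(v):=\varphi\{t,v\}$ on $\oo_K^*$, which vanishes on $B^*\cap\oo_K^*$. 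A systematic application of the Steinberg relation $\{a,1-a\}=0$ with $a=t\alpha$ for $\alpha\in\oo_K^*$, together with the $\oo_K^*$-vanishing, shows that $\chi(1+t\alpha)=0$. The remaining task is to identify $\chi$ with the character $v\mapsto q^{-a\,\nu_{\bar K}(\bar v)}$ for an appropriate $a\in\dc/(2\pi i/\ln q)\dz$, which together with $\varphi\{t,t\}=0$ yields $\varphi=q^{-a\,\nu_K(\cdot,\cdot)}$ in view of the computation of $\nu_K(\{t,v\})=-\nu_{\bar K}(\bar v)$ from~\eqref{cel}.

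The main obstacle is this last identification. The Steinberg argument above only gives $k$-torsion of $\chi$ on the higher principal unit groups $1+t^k\oo_K$ for $k\ge 2$, not outright vanishing, and a priori $\dc^*$ has plenty of torsion. To rule out nontrivial torsion contributions and conclude that $\chi$ factors cleanly through $\nu_{\bar K}$ applied to residues, one needs either an appeal to the structure of the topological Milnor $K$-group of two-dimensional local fields via the Gersten--Quillen complex (cf.~\cite{Kat}), or a careful case-by-case analysis in each of the three field types~\eqref{2f}, exploiting the specific form of $B$ to chain additional Steinberg identities and exhaust all higher principal units.
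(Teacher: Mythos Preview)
Your overall strategy---reducing via Proposition~\ref{prop-coh} to a homomorphism $\varphi\colon K_2(K)\to\dc^*$ and then analysing which symbols it must kill---is the same as the paper's, but there are genuine gaps in both directions that the paper closes by different means.

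\medskip
\textbf{The ``only if'' direction for $GL_2(B)$.} Your computation that $\nu_K(x,y)=0$ for $x,y\in B^*$ is correct, but it does not by itself yield a splitting of $\hat G$ over $GL_2(B)$. What you have shown is that the commutator pairing $\langle\diag(y,1),\diag(1,x)\rangle$ is trivial for $x,y\in B^*$; to upgrade this to a splitting over all of $GL_2(B)$ you would need to know that the restriction of $\widetilde{SL_2(K)}$ to $SL_2(B)$ is itself a pushout of a Steinberg-type extension of $SL_2(B)$ with kernel generated by $B^*$-symbols. Since $B$ is not a field (in fact not even semilocal---for instance $\df_q[[u]]((t))=\df_q[[u,t]][t^{-1}]$ is a Dedekind domain with infinitely many maximal ideals), such statements about $SL_2(B)=E_2(B)$ and $K_2(B)$ are not immediate. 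The paper avoids this entirely: it works with the measure-theoretic model $\widetilde{GL_2(K)}_{\dr_+^*}$ and writes down an explicit section $g\mapsto(g,\mu_{B,g})$ over $GL_2(B)$, normalising Haar measures so that the compact open subgroups $(B^2\cap g\oo_K^{\,2})/(B^2\cap h\oo_K^{\,2})$ have measure~$1$. The key point is $gB^2=B^2$ for $g\in GL_2(B)$, which makes the section multiplicative.

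\medskip
\textbf{The ``if'' direction.} You are right that the Steinberg-relation approach runs into a torsion obstacle on $1+t^k\oo_K$ for $k\ge 2$, and neither a case analysis nor an appeal to topological $K_2$ is needed. The paper's argument is short and structural: write $\nu_K=\partial_1\circ\partial_2$ with the two exact sequences
\[
1\to K_2(\oo_K)\to K_2(K)\xrightarrow{\partial_2}\bar K^*\to 1,\qquad 1\to\oo_{\bar K}^*\to\bar K^*\xrightarrow{\partial_1}\dz\to 0.
\]
Then $\Ker\nu_K=\partial_2^{-1}(\oo_{\bar K}^*)$. The image of $K_2(\oo_K)$ (i.e.\ symbols from $\oo_K^*$) is exactly $\Ker\partial_2$, and symbols $\{t,v\}$ with $v\in B^*\cap\oo_K^*$ surject via $\partial_2$ onto $\oo_{\bar K}^*$ (since $B^*\cap\oo_K^*$ reduces onto $\oo_{\bar K}^*$ in each of the three cases~\eqref{2f}). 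Hence $\Ker\nu_K$ is generated by $\oo_K^*$-symbols and $B^*$-symbols. Once $\varphi$ kills both families it factors through $\dz=K_2(K)/\Ker\nu_K$, and choosing $a$ with $q^{-a}=\varphi(\text{generator})$ finishes. This replaces your delicate unit-group analysis with a single diagram chase.
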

\begin {nt}
{\em
Although the subring $ B $ depends on the choice of isomorphism~\eqref {2f}, the obtained result on the description of the central
extension $ \hat{G} $ by
means of map~\eqref{oto} does not depend on the choice of this isomorphism.
}
\end{nt}
\begin{proof} (of proposition~\ref{pred}).
 We assume, first, that the central extension $ \hat {G} $ is obtained from the central extension $ \widehat{GL_2 (K)} $ by means of
 the map $ K_2 (K) \lto \dc^*: (f, g) \mapsto q^{- a \, \nu_K (f, g)} $. Since, by construction, the central extension
 $ \widehat{GL_2 (K)} $ splits over the subgroup  $ K^*$ of the group $ GL_2 (K) $,
we have that the central extension $ \hat {G} $ will also  split over this subgroup.
 It follows from proposition~\ref{pr}  that the central extension  $ \hat{G} $ is obtained from the central extension
 $ \widehat{GL_2 (K)}_{\dr_+^*} $
by means of the map $ \dr_+^* \lto \dc^*: x \mapsto x^a $. (For the construction of the central extension
$ \widehat{GL_2 (K)}_{\dr_+^*} $
 one has to obtain the field $ K $ from a scheme $ \Spec R $, where a ring $ R $ is either $ \df_q [[u, t]] $ or $ \oo_L [[t]] $ or
 $ \oo_L [[ u]] $.) Therefore it is sufficient to prove that the central extension  $ \widehat{GL_2 (K)}_{\dr_+^*} $ splits over the subgroups
 $ GL_2 (\oo_K) $ and $ GL_2 (B) $ of the group  $ GL_2 (K) $.
For the subgroup $ GL_2 (\oo_K) $ this follows immediately from the construction of the central extension.

We prove the splitting of the central extension $ \widehat{GL_2 (K)}_{\dr_+^*} $ over the  subgroup $ GL_2 (B) $ of the group $ GL_2 (K) $.
First,  we  show that the central extension $ \widetilde{GL_2 (K)}_{\dr_+^*} $ (see section~\ref{constr}) splits over the subgroup
 $ GL_2 (B) $. For every $ g \in GL_2 (K) $ we can uniquely define
an element  $ \mu_{B, g} \in \mu (\oo_K^{\, 2} \mid g \oo_K^{\, 2}) $ as $ \mu_{B, g} = \mu_1^{- 1} \otimes \mu_2 $, where
 $ \mu_1 \in \mu (\oo_K^{\, 2} / h \oo_K^{\, 2}) $,
$ \mu_2 \in \mu (g \oo_K^{\, 2} / h \oo_K^{\, 2}) $, the element $ h \in GL_2 (K) $ is any  that satisfies
$ h \oo_K^{\, 2} \subset \oo_K^{\, 2} $,
$ h \oo_K^{\, 2} \subset g \oo_K^{\, 2} $. The elements  $ \mu_1 $ and $ \mu_2 $ are defined  by the following rules:
 $$ \mu_1 \left(\frac{B^2 \cap \oo_K^{\, 2}}{B^2 \cap h \oo_K^{\, 2}} \right) = 1 \mbox{,}
\qquad \mu_2 \left(\frac{B^2 \cap g \oo_K^{\, 2}}{B^2 \cap h \oo_K^{\, 2}} \right) = 1 $$
(this definition makes sense, since the spaces in the brackets at $ \mu_1 $ and $ \mu_2 $ are  open compact subgroups of
the locally compact groups $ \oo_K^{\, 2} / h \oo_K^{\, 2} $ and $ g \oo_K^{\, 2} / h \oo_K^{\, 2} $, respectively.) The resulting element
$ \mu_ {B, g} $ does not depend on the choice of the element $ h \in GL_2 ( K) $.
Now, from the construction it is not difficult to see that a section $ g \mapsto (g, \mu_{B, g}) $, where $ g \in GL_2 (B) $, gives the splitting of the central extension $ \widetilde{GL_2 (K) }_{\dr_+^*} $ over the subgroup  $ GL_2 (B) $. (We have to use that $ g B^2 = B^2 $ for any
$ g \in GL_2 (B) $.)
The  group $ GL_2 (B) = SL_2 (B) \rtimes B^* $. Hence  it follows that the central extension
$ \widehat{GL_2 (K)}_{\dr_+^*} $ splits over the subgroup  $ GL_2 (B) $. (The action of the group $ B^* $ does not change the newly constructed section over the subgroup $ SL_2 (B) $. This is obvious if one  fixes also this constructed section over the  subgroup  $ B^* $.)

We suppose now that the central extension $ \hat{G} $ splits over the subgroups $ GL_2 (\oo_K) $, $ GL_2 (B) $ and $ K^* $ of the group
 $ GL_2 (K) $. It follows from  proposition~\ref{prop-coh} and the splitting of the central extension $ \hat {G} $ over the subgroup $ K^* $  that the central extension $ \hat {G} $ is
obtained from the central extension $ \widehat{GL_2 (K)} $ by means of some map $ \phi \in \Hom (K_2 (K), \dc^*) $.
It follows
from the description of the map $ \nu_K (\cdot, \cdot): K_2 (K) \lto \dz $ as the composition of the boundary maps in the Milnor $ K $-theory  that the group
$ \Ker \nu_K (\cdot, \cdot) $ is generated by the following elements: $ (f, g) $, where $ f $ and $ g $ are from  $ \oo_K^* $, and $ (f, g) $, where $ f $ and $ g $ are from $ B^* $.
(Indeed, $ \nu_K (\cdot, \cdot) = \partial_1 \circ \partial_2 $ and the following sequences are exact:
\begin{eqnarray*}
1 \lto K_2 (\oo_K) \lto K_2 (K) \stackrel{\partial_2}{\lto} \bar{K}^* \lto 1
\mbox{,} \\
1 \lto \oo_{\bar{K}^*} \lto \bar{K}^* \stackrel{\partial_1}{\lto} \dz \lto 0 \mbox{,}
\end{eqnarray*}
 where $ \partial_2 $ is the tame symbol  associated with the discrete valuation of the field $ K $, $ \partial_1 = \nu_{\bar{K}} $.)

It follows from formula~\eqref{equ}  that if the central extension $ \hat {G} $ splits over the subgroup  $ GL_2 (\oo_K) $, then
  the elements  $ (f, g) $, where $ f $ and $ g $ are from $ \oo_K^* $,
belong to the group $ \Ker \phi $. Similarly, it follows from the splitting of the central extension $ \hat {G} $ over the subgroup $ GL_2 (B) $
  that the elements  $ (f, g) $, where $ f $ and $ g $
are from $ B^* $,
belong to the group $ \Ker \phi $.
Hence, we obtain that $ \phi = \varphi \circ \nu_K (\cdot, \cdot) $ for some homomorphism $ \varphi $ from the group $ \dz $ to the
group $ \dc^* $. We find
$ a \in \dc / \left(\frac{2 \pi i}{\ln q} \right) \dz $ such  that
$ q^{-a} = \varphi (1) $.
Then $ \phi = q^{-a \, \nu_K (\cdot, \cdot)} $. The proposition is proved.
\end{proof}

\vspace{0.3cm}

\noindent
Steklov Mathematical Institute of RAS \\
Gubkina str. 8, 119991, Moscow, Russia \\
{\it E-mail:}  ${d}_{-} osipov@mi.ras.ru$ \\

\end{document}